\def\logBi{{\bf L}_{i}}
\def\Abeta{\Lambda_{n}^{\beta}}
\newtheorem{theorem}{Theorem}
\newtheorem{lemma}{Lemma}
\newtheorem{corollary}{Corollary}
\begin{document}

\begin{titlepage}
\title{\Large Part-products of $S$-restricted integer compositions}

\author{Eric Schmutz \\
\small Department of Mathematics\\
\small Drexel University\\
\small Philadelphia, PA 19104 \\
\small schmutze@drexel.edu\\
\and
Caroline Shapcott\thanks{Supported in part by N.S.A. grant H98230-09-1-0062.  Portions of this work will appear in the author's doctoral dissertation, written under the direction of Eric Schmutz at Drexel University.}\\
\small Department of Mathematics\\
\small Drexel University\\
\small Philadelphia, PA 19104 \\
\small cshapcott@drexel.edu\\
 }
 
\date{\today}

\maketitle
\thispagestyle{empty}
\abstract{ 
If $S$ is a cofinite set of positive integers, an ``$S$-restricted composition of $n$'' is a sequence of elements of $S$, denoted $\vec{\lambda}=(\lambda_1,\lambda_2,\dots)$, whose sum is $n$.  For uniform random $S$-restricted compositions, the random variable ${\bf B}(\vec{\lambda})=\prod_i \lambda_i$ is asymptotically lognormal.  The proof is based upon a combinatorial technique for decomposing a composition into a sequence of smaller compositions.
}

\vskip.5cm \noindent {\bf Keywords and phrases}: {\em  Integer compositions, generating functions, central limit theorem.}
\vskip.5cm \noindent {\bf AMS Classification:}  05A16 (Asymptotic enumeration), 
60C05 (Combinatorial probability),  60F05 (Central limit and other weak theorems).
\end{titlepage}

\section{Introduction}
A composition of $n$ is a sequence of positive integers whose sum is $n$.  Hitczenko made the following observation: if $\Gamma_{1},\Gamma_{2},\dots $ are independent random variables with Geometric($1/2$) distributions, and if $\tau=\min \lbrace t:  \sum\limits_{i=1}^{t}\Gamma_{i} \geq n\rbrace$, then \vspace{-5mm}
\begin{equation*}
(\Gamma_{1}, \Gamma_{2},\dots , \Gamma_{\tau -1}, n-\sum\limits_{i=1}^{\tau -1}\Gamma_{i})
\end{equation*}
is a uniform random composition of $n$.  Using this fact, Hitczenko and others were able to determine the asymptotic distributions of a variety of random variables defined on the space of compositions of $n$ with a uniform probability measure \cite{ArchibaldKnopf,HitczenkoWHT,Hitczenko1,Hitczenko2,Hitczenko4,Hitczenko5,Hitczenko6}.

In her thesis \cite{Shapcott1}, Shapcott considered the random variable ${\bf B}(\vec{\lambda})=\prod_{i}\lambda_{i}$, the product of the parts of a composition $\vec{\lambda}$.  (See Hahn \cite{Hahn} for analogous results in a different context.)  Because of Hitczenko's observation, it was straightforward for her to use known results on random index summation \cite{Gut,Renyi1} to prove that ${\bf B}$ is asymptotically lognormal.  The goal of this paper is to extend Shapcott's results to a more general setting where Hitczenko's observation is {\sl not} applicable.

In recent years, there has been a resurgence of interest in compositions with restrictions on the part sizes.  The easiest case is ``$S$-restricted compositions,'' i.e.\  compositions whose parts are all elements of a fixed subset $S\subset {\mathbb Z}_{+}$.
Hence there have been papers on compositions with no parts of size 2 \cite{Chinn3}, compositions with no parts of size $k$ \cite{Chinn2}, compositions with parts from the interval $(1,k)$ \cite{Chinn1}, compositions with parts greater than or equal to $d$ \cite{Bona}, compositions with parts equal to either $a$ or $b$ \cite{Bona}, compositions with parts from an arbitrary finite set \cite{Malandro}, and compositions with parts from an arbitrary (not necessarily finite) set \cite{Banderier,Heubach}.  More complicated restrictions have also been considered, such as restrictions on the differences between successive parts \cite{Bender1,Knopfmacher1} and restrictions on the parts' multiplicities  \cite{Hwang,Knopfmacher2}.  

In \cite{Shapcott2}, Shapcott studied the asymptotic distribution of ${\bf B}$ for uniform random $1$-free compositions of $n$, i.e.\ the case $S=\lbrace k\in {\mathbb Z}: k\geq 2\rbrace$.  In this case, it does not seem possible to generate random compositions using a stopped sequence of independent random variables.  It is straightforward to replace the geometric variables with $1$-omitting analogues, but there is no obvious way around the fact that the putative last part $n-\sum\limits_{i=1}^{\tau -1}\Gamma_{i}$ need not lie in $S$. Furthermore, the randomly-generated compositions are not all equally likely to be chosen.  Shapcott was able to circumvent these difficulties by embedding the set of $1$-free compositions in a more tractable auxiliary space and doing the hard calculations there.  The method of proof was completely different from the methods in this paper.  

This paper concerns $S$-restricted compositions of $n$ in the case where $S\subset {\mathbb Z}_+$ is any proper cofinite set of positive integers.  We prove that, for random $S$-restricted compositions of $n$, ${\bf B}$ is asymptotically lognormal:

\begin{theorem} \label{normal}
Let $P_n$ be the uniform probability measure on the set of $S$-restricted compositions of $n$.  There exist constants $a_1,b_1>0$ and $a_0,b_0$ such that, for all $x$,\vspace{-1mm}
\begin{equation*}
P_n\left( \frac{\log{\bf B}-\mu_n}{\sigma_n} \leq x  \right)=\frac{1}{\sqrt{2\pi} } \int\limits_{-\infty}^x e^{-t^2/2} dt +O \left( \tfrac{\sqrt[3]{\log n}}{n^{1/6}}  \right)
\end{equation*}
where $\mu_n=a_1n+a_0+o(1)$ and $\sigma_n^2=b_1n+b_0+o(1)$.  The rate of convergence is uniform for $|x|\leq \sqrt{n}$.
\end{theorem}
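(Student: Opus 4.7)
My plan is to reduce $\log {\bf B}(\vec\lambda) = \sum_i \log \lambda_i$ to a sum of (asymptotically) independent random variables by a renewal-style combinatorial decomposition, then apply a quantitative central limit theorem. Because $S$ is cofinite, the generating function $F(z) = 1/(1 - \sum_{s\in S} z^s)$ is meromorphic with a simple dominant pole at some $\rho \in (0,1)$, so $[z^n]F(z) = A\rho^{-n}(1+O(\gamma^n))$ for some $\gamma<1$. Fix any $s_0 \in S$ and call an index $j$ a \emph{renewal} if $\lambda_j = s_0$. Every $S$-restricted composition then factors uniquely as
\[
\vec\lambda = B_1\, s_0\, B_2\, s_0\, \ldots\, s_0\, B_N,
\]
where each $B_i$ is an $(S\setminus\{s_0\})$-restricted composition (possibly empty). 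This is the "decomposition of a composition into a sequence of smaller compositions" alluded to in the abstract.

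The joint generating function in these variables is explicit, and a standard singularity/local-limit analysis shows that under the uniform measure on $S$-restricted compositions of $n$, the number of renewals $N$ is asymptotically Gaussian with mean and variance linear in $n$, and, conditional on $N$ and on the block-size partition summing to $n - Ns_0$, the blocks $B_1,\ldots,B_N$ are independent and each uniform on $(S\setminus\{s_0\})$-restricted compositions of the prescribed size. The block lengths $|B_i|$ have exponentially decaying tails, so each $\log {\bf B}(B_i)$ has moments of all orders. Writing
\[
\log {\bf B}(\vec\lambda) = N \log s_0 + \sum_{i=1}^N \log {\bf B}(B_i),
\]
we obtain a random-index sum of asymptotically iid variables. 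A Gut/Renyi-type random-index CLT (the same family of tools cited in the introduction) combined with the Gaussian behavior of $N$ yields the limit theorem for $\log{\bf B}$, and the mean/variance expansions $\mu_n = a_1 n + a_0 + o(1)$, $\sigma_n^2 = b_1 n + b_0 + o(1)$ come from differentiating the bivariate generating function $F(z,u) = 1/(1 - \sum_{s\in S} s^u z^s)$ at $u=0$ and applying singularity analysis.

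The main obstacle is making this approximate-iid scheme precise enough to produce the quantitative rate $O(\sqrt[3]{\log n}/n^{1/6})$ uniformly in $|x|\leq \sqrt n$. Because the blocks are only conditionally iid under the uniform measure, one has to absorb the cost of conditioning on the global sum being exactly $n$; I expect this to go through a bivariate local limit theorem for $(N, \sum |B_i|)$, or equivalently a uniform version of singularity analysis for the bivariate generating function $F(z,u)$. The unusual rate most likely arises from a three-way balance: truncate the individual block sizes at level $\log n$ (the truncation probability is then a small power of $n$), apply Berry--Esseen to the truncated iid sum (paying a factor from the truncated third moment), and absorb the $\sqrt{\log n}$ fluctuation of $N$ via Anscombe's theorem. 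Optimizing these three error terms yields a power of the form $n^{-1/6}$ up to logarithmic factors, and it is the careful verification of this balance — together with moderate-deviation tail bounds valid out to $|x| \leq \sqrt n$ — that I expect to be the most delicate part of the argument.
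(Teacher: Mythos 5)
Your renewal decomposition (split $\vec\lambda$ at the occurrences of a fixed part value $s_0$, so $\vec\lambda = B_1\, s_0\, B_2\, s_0\, \cdots\, s_0\, B_N$ with the $B_i$ conditionally independent given $N$ and the block sizes) is genuinely different from the decomposition the paper uses, and it is a natural idea: it yields the same kind of conditional-product structure that the paper obtains, and the conditional-uniformity claim for the $B_i$ is correct, for essentially the same bijection argument as the paper's Theorem~\ref{independence}. The paper instead decomposes at a \emph{deterministic} set of $m$ positions (multiples of $\lfloor n/m\rfloor$), lets $\Pi_0$ be the composition of ``divider'' parts that happen to cover those positions, and gets a \emph{fixed} number $m$ of conditionally independent sub-compositions $\Pi_1,\dots,\Pi_{m+1}$ whose sizes are pinned into the narrow window $[\,n/m-2\beta,\ n/m\,]$ after truncating part sizes at $\beta$. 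This is the Bernstein/Markov ``blocking'' trick, and it is chosen precisely so that the Berry--Esseen step applies to a sum of a deterministic number of bounded-moment summands, with no random-index correction at all. The rate $O(\sqrt[3]{\log n}/n^{1/6})$ then comes from balancing $m\approx n^{1/3}/(\log n)^{2/3}$ blocks of size $\approx n/m$ against the $\beta\approx 5\log_{1/p}n$ truncation, producing a Berry--Esseen error $O(1/\sqrt m)$.

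The quantitative part of your sketch, however, contains a real gap. The number of renewals $N$ is asymptotically Gaussian with variance $\Theta(n)$, so it fluctuates on the scale $\sqrt n$, not $\sqrt{\log n}$ as you assert. Since each $\log{\bf B}(B_i)$ has mean $\Theta(1)$, replacing the random index $N$ by its mean changes $\sum_{i\leq N}\log{\bf B}(B_i)$ by a quantity of order $\sqrt n$ --- the same order as $\sigma_n$ itself --- so the random-index correction is not ``absorbed'' but must be handled exactly, including the covariance between $N$ and the block contributions under the conditioning $\sum_i|B_i|+(N-1)s_0=n$. That covariance feeds into $b_1$ in the variance expansion, so it is part of the main term, not an error term. (There is also a small bookkeeping slip: $N$ blocks are separated by $N-1$ copies of $s_0$, so the total is $n-(N-1)s_0$ and the extra term is $(N-1)\log s_0$.) This is precisely the delicacy the paper sidesteps by making the number of blocks deterministic and re-deriving the exact constants $a_1,a_0,b_1,b_0$ from the bivariate generating function $F(x,t)=1-\sum_{k\in S}k^t x^k$ via singularity analysis (Theorems~\ref{E(logB)} and \ref{V(logB)}), rather than through a random-index CLT. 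Your plan could likely be made rigorous via a bivariate local limit theorem or a quantitative Anscombe argument in the style of Landers--Rogge, but that analysis is not sketched here, and the three-way balance you describe does not in fact produce the exponent $n^{-1/6}$ from the quantities you list; the exponent in the paper emerges from the specific choice of $m$ in its blocking scheme.
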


As in \cite{HwangRate}, we deduce bounds on the rate of convergence using methods that are ultimately based on the Berry-Essen inequality.  However, our proof involves a  blocking argument, similar to that of Bernstein \cite{Bernstein} and Markov (see page 376 of
\cite{Feller}), for decomposing a composition into a sequence of smaller compositions. 
Throughout this manuscript, we denote the set of $S$-restricted compositions of $n$ as $\Lambda_n$ and an individual composition of $n$ as $\vec{\lambda}$.  $P_n$ denotes the uniform probability measure on $\Lambda_n$, and $E_n$ denotes the expected value with respect to $P_n$.  If $F$ is a formal power series in $x$, then we write $[x^n]F$ to denote the coefficient of $x^n$ in $F$.

\section{Number of compositions}
At several points in the proof we need estimates for the cardinality of the sample space $\Lambda_{n}$.  This kind of calculation can be considered folklore since it is clearly known to experts, but it is hard to know who to credit (see page 297 of \cite{Flajolet}, for example, and the rather general results in \cite{Bender1}).  We present an asymptotic formula that will serve the needs of this paper.

We begin by defining $S$ to be an arbitrary proper cofinite set of positive integers and $M$ to be the largest element of $\bar{S}=\mathbb{Z}_+-S$.  Define
\begin{equation*}
F(x,t)=1-\sum_{k\in S} k^t x^k
\end{equation*}
and
\begin{equation*}
f(x)=F(x,0)=1-\sum_{k\in S} x^k.
\end{equation*}

\begin{lemma} \label{smallestroot}
The smallest magnitude root of $f(x)=1-\sum\limits_{k\in S} x^k$\ is real, lies in the interval $(\tfrac{1}{2},1)$, and has multiplicity one.
\end{lemma}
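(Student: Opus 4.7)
My plan is to establish the three assertions---existence of a real root in $(1/2,1)$, minimality of its modulus, and simplicity---in that order, using only elementary estimates on the series defining $f$.

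First, I would analyze $f$ as a function on the real interval $(0,1)$, where the series $\sum_{k\in S}x^k$ converges by comparison with $\sum_{k\ge1}x^k$. Writing $S=\mathbb{Z}_+\setminus\bar{S}$ gives
\[ f(1/2)=1-\sum_{k\in S}2^{-k}=\sum_{k\in\bar{S}}2^{-k}>0, \]
where positivity uses that $\bar{S}$ is nonempty because $S$ is a proper subset of $\mathbb{Z}_+$. As $x\to1^{-}$, monotone convergence gives $\sum_{k\in S}x^k\to\infty$ since the cofinite set $S$ is infinite, so $f(x)\to-\infty$. Continuity of $f$ on $(0,1)$ and the intermediate value theorem then produce a real zero $\rho\in(1/2,1)$.

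Second, to show $\rho$ has smallest magnitude among all complex roots, I would argue by the triangle inequality. Suppose $z$ is a root of $f$ with $|z|\le\rho$; since $f(0)=1\ne0$ we have $z\ne0$, and
\[ 1=\Bigl|\sum_{k\in S}z^k\Bigr|\le\sum_{k\in S}|z|^k\le\sum_{k\in S}\rho^k=1, \]
so both inequalities must be equalities. The second equality forces $|z|=\rho$, because $x\mapsto\sum_{k\in S}x^k$ is strictly increasing on $(0,1)$. The first equality forces the complex numbers $\{z^k:k\in S\}$ to share a common argument. Here I would use the fact that the cofinite set $S$ contains two consecutive integers $k$ and $k+1$, so $\arg(z)\equiv\arg(z^{k+1})-\arg(z^k)\equiv0\pmod{2\pi}$. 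Hence $z$ is a positive real number, and therefore $z=\rho$.

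Finally, for simplicity I would differentiate termwise inside the disk of convergence to obtain
\[ f'(\rho)=-\sum_{k\in S}k\,\rho^{k-1}, \]
which is a strictly negative sum of positive terms since $\rho>0$ and every $k\in S$ is a positive integer; hence $\rho$ is a simple zero. I expect the equality-in-the-triangle-inequality step to be the only mildly delicate part; the essential point there is that cofiniteness of $S$ supplies two consecutive exponents, which is exactly the rigidity needed to force $z$ to be a positive real.
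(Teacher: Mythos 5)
Your proof is correct, and it takes a genuinely different (and somewhat leaner) route than the paper for the minimality step. The paper splits this into two pieces: Rouch\'e's theorem on a circle of radius $p-\epsilon$ to rule out roots strictly inside $|x|=p$, followed by a separate argument on the boundary that takes real parts of $f(\hat p)=0$ to obtain $\sum_{k\in S}p^k(1-\cos(k\theta))=0$, forces each term to vanish, and then uses two consecutive exponents from $S$ to conclude $\theta\equiv0\pmod{2\pi}$. You instead handle both the open disk and the boundary circle in a single stroke via the equality case of the triangle inequality: $1=|\sum_{k\in S}z^k|\le\sum_{k\in S}|z|^k\le\sum_{k\in S}\rho^k=1$ forces $|z|=\rho$ (strict monotonicity) and forces the $z^k$ to be collinear with argument $0$, after which the same consecutive-exponents observation pins $z$ to the positive real axis. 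The underlying rigidity is the same --- cofiniteness of $S$ supplies $k,k+1\in S$ --- but your version avoids Rouch\'e entirely and does not need the auxiliary remark that $f(-p)>0$ (which the paper uses only to rule out $\hat p=-p$ before writing $\hat p$ in polar form). You also make the simplicity claim explicit via $f'(\rho)=-\sum_{k\in S}k\rho^{k-1}<0$, a point the paper leaves implicit in the phrase ``strictly decreasing.'' One small remark worth keeping in a final write-up: the equality-case argument for the triangle inequality is being applied to an infinite sum, so you should note (as you do implicitly) that the series is absolutely convergent for $|z|<1$, which is what licenses the conclusion that all the $z^k$ share a common argument.
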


\begin{proof}
First we verify that $f$ has a real root $p$ in the interval $(\tfrac{1}{2},1)$.  The function $f$ is continuous and strictly decreasing on $(0,1)$.  Note that $f(\tfrac{1}{2})$ is strictly positive and that $\lim\limits_{x\rightarrow 1^{-} }f(x) =-\infty$.  Therefore there is a unique positive real root $p$ in the interval $(\tfrac{1}{2},1)$.

Next we use Rouch\'e's theorem to verify that $f$ has no other roots in $|x|<p$.  Let $g$ be the constant function $g(x)=1$, which obviously has no zeros in $|x|<p$.  Let $\epsilon$ be an arbitrarily small positive number, and observe that $\sum\limits_{k\in S} p^k=1$.  Then, for $|x|=p-\epsilon$, 
\begin{equation*}
|f(x)-g(x)|=\left| -\sum\limits_{k\in S} x^k \right| \leq \sum\limits_{k\in S} |x|^k < \sum\limits_{k\in S} p^k =1=|g(x)|.
\end{equation*}
By Rouch\'e's theorem, $f$ has no roots inside the circle $|x|=p-\epsilon$. 

We use proof by contradiction to verify that no other root of $f(x)$ has magnitude {\sl equal} to $p$.  Suppose $\hat{p}$ is a root of $f(x)$ such that $|\hat{p}|=p$ and $\hat{p}\not=p$.  Because $\overline{S}$ is finite, $S$ includes odd elements and 
\begin{equation*}
f(-p)=1-\sum\limits_{k\in S} (-p)^k>1-\sum\limits_{k\in S} p^k=0.
\end{equation*}
Therefore  $\hat{p}$ is not real and we have
$\hat{p}=p(\cos\theta+i\sin\theta)$ with $0<|\theta|<2\pi$.  Since $f(\hat{p})=0$, the real part of $f(\hat{p})$ is also zero:
\begin{align*}
0&=Re (1-\sum_{k\in S}\hat{p}^k)\\
&=1-\sum_{k\in S} p^k \cos(k\theta) \\
&=(1-\sum_{k\in S} p^k)+\sum_{k\in S} p^k(1-\cos(k\theta) )\\
&=0+\sum_{k\in S} p^k(1-\cos(k\theta) ).
\end{align*}
Because $1-\cos (k\theta)\geq 0$, each term in the sum must be zero.  Therefore, for all $k\in S$, there is an $\ell_{k} \in{\mathbb Z}$ such that $k\theta=2\pi \ell_{k}$.  Because $\bar{S}$ is finite, we can choose $k_{0}$ such that $k_{0}$ and $k_{0}+1$ are both elements of $S$, but then
\begin{equation*}
\theta =(k_{0}+1-k_{0})\theta =2\pi (\ell_{k_{0}+1}-\ell_{k_{0}}).
\end{equation*}
Since $\ell_{k_{0}+1}-\ell_{k_{0}}\in\mathbb{Z}$, this contradicts the fact that $0<|\theta| <2\pi$.
\end{proof}
 
The following Lemma is proved in \cite{Shapcott2}, and is needed for Theorems \ref{numberofcomps}, \ref{E(logB)}, and 
\ref{V(logB)}.

\begin{lemma} \label{momentsofB}
The moments of ${\bf B}$ are given by
\begin{equation*}
E_n({\bf B}^t)=\frac{1}{|\Lambda_n|} [x^n] \frac{1}{1-\sum\limits_{k\in S} k^t x^k}.
\end{equation*}
\end{lemma}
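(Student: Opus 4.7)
My plan is a direct generating function manipulation. By definition of the uniform measure,
\begin{equation*}
E_n({\bf B}^t) = \frac{1}{|\Lambda_n|} \sum_{\vec{\lambda}\in\Lambda_n} {\bf B}(\vec{\lambda})^t = \frac{1}{|\Lambda_n|} \sum_{\vec{\lambda}\in\Lambda_n} \prod_i \lambda_i^t,
\end{equation*}
so the content of the lemma is the identity
\begin{equation*}
\sum_{\vec{\lambda}\in\Lambda_n} \prod_i \lambda_i^t = [x^n]\frac{1}{1-\sum_{k\in S} k^t x^k}.
\end{equation*}

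The first step is to classify compositions by their length. For fixed $m\geq 1$, the inner sum over compositions $(\lambda_1,\dots,\lambda_m)$ with each $\lambda_i\in S$ and $\sum\lambda_i=n$ factors componentwise, so
\begin{equation*}
\sum_{\substack{\lambda_1+\cdots+\lambda_m=n \\ \lambda_i\in S}} \prod_{i=1}^m \lambda_i^t \;=\; [x^n]\Bigl(\sum_{k\in S} k^t x^k\Bigr)^{\!m}.
\end{equation*}
This is just the standard observation that the ordinary generating function of a sequence of $m$ parts drawn from $S$, each weighted by $k^t$, is the $m$-th power of the generating function of a single part.

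Next I would sum over $m\geq 0$ (the empty composition contributes $0$ unless $n=0$, so this introduces no error for $n\geq 1$). Exchanging the finite sum over $m$ with coefficient extraction and summing the resulting geometric series yields
\begin{equation*}
\sum_{m\geq 0} [x^n]\Bigl(\sum_{k\in S} k^t x^k\Bigr)^{\!m} \;=\; [x^n]\sum_{m\geq 0}\Bigl(\sum_{k\in S} k^t x^k\Bigr)^{\!m} \;=\; [x^n]\frac{1}{1-\sum_{k\in S} k^t x^k},
\end{equation*}
which is the desired identity. Dividing by $|\Lambda_n|$ completes the proof.

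There is no real obstacle here: the only thing that needs a line of justification is the interchange of the sum over $m$ with coefficient extraction, which is valid because for each fixed $n$ only finitely many values of $m$ (namely $m\leq n$, since parts are positive) contribute to $[x^n]$. Everything else is formal power series arithmetic. Note also that setting $t=0$ recovers the familiar formula $|\Lambda_n|=[x^n]\,1/f(x)$, which is consistent with Lemma \ref{smallestroot} governing the dominant singularity of that generating function.
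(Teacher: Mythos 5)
Your proof is correct; note that the paper itself does not prove this lemma but defers it to reference~\cite{Shapcott2}. Your argument is the standard sequence-construction identity (classify by number of parts, factor the inner sum, sum the geometric series), which is exactly the expected route, and the justification of the interchange of $\sum_m$ with $[x^n]$ via positivity of the parts is the only point requiring care and you handled it.
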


\begin{theorem} \label{numberofcomps}
Let $p$ be the smallest root of $f(x)$ and let $r$ be the magnitude of the second smallest root.  Then 
\begin{equation*}
|\Lambda_n|=\frac{1}{p^n \sum\limits_{k\in S} kp^k} + O\left( \frac{n^{M-1}}{r^n} \right).
\end{equation*}
\end{theorem}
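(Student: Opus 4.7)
The plan is to express $|\Lambda_n|$ as a coefficient of a rational generating function and then extract its asymptotics via partial fractions. By Lemma \ref{momentsofB} with $t=0$, $|\Lambda_n|=[x^n]\tfrac{1}{f(x)}$. To exhibit $1/f$ as a rational function, I would exploit that $M=\max\overline{S}$ is finite, so $\{M+1,M+2,\dots\}\subseteq S$ and
\[
\sum_{k\in S} x^k \;=\; P(x) \;+\; \frac{x^{M+1}}{1-x},
\]
where $P(x)=\sum_{k\in S,\,k\leq M}x^k$ has degree at most $M-1$ (since $M\in\overline{S}$ means $M\notin S$). Clearing denominators yields $f(x)=Q(x)/(1-x)$ with $Q(x):=(1-x)(1-P(x))-x^{M+1}$ a polynomial of degree \emph{exactly} $M+1$ (its leading coefficient is $-1$), so $1/f(x)=(1-x)/Q(x)$ is rational.

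By Lemma \ref{smallestroot}, $p$ is a simple zero of $f$, hence of $Q$, and all other zeros of $Q$ have magnitude at least $r$. Enumerating these remaining zeros as $\alpha_1,\alpha_2,\dots$ with multiplicities $m_1,m_2,\dots$ satisfying $\sum_j m_j = M$, partial fractions give
\[
\frac{1}{f(x)} \;=\; \frac{A_0}{x-p} \;+\; \sum_{j}\sum_{\ell=1}^{m_j}\frac{A_{j,\ell}}{(x-\alpha_j)^\ell}.
\]
Differentiating $Q(x)=(1-x)f(x)$ at $x=p$ and using $f(p)=0$ gives $Q'(p)=(1-p)f'(p)$, whence $A_0=(1-p)/Q'(p)=1/f'(p)$. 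Since $f'(p)=-\sum_{k\in S} kp^{k-1}$, extracting the coefficient yields
\[
[x^n]\frac{A_0}{x-p} \;=\; -\frac{A_0}{p^{n+1}} \;=\; \frac{1}{p^n\sum_{k\in S}kp^k},
\]
which is exactly the claimed leading term.

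For the error, I would use the standard identity $[x^n](1-x/\alpha)^{-\ell}=\binom{n+\ell-1}{\ell-1}\alpha^{-n}$ to conclude that each secondary summand satisfies $\bigl|[x^n] A_{j,\ell}/(x-\alpha_j)^\ell\bigr| = O\bigl(n^{\ell-1}/|\alpha_j|^n\bigr) = O(n^{\ell-1}/r^n)$. Since $\ell\leq m_j\leq M$ and there are only finitely many terms, summing gives a total error of $O(n^{M-1}/r^n)$, as required. The main ``obstacle'' is really just the bookkeeping needed to see that the total multiplicity of secondary roots is exactly $M$ — which forces the exponent in $n^{M-1}$ — and this comes cleanly out of the polynomial identity $Q(x)=(1-x)f(x)$ once one notices $M\notin S$. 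Everything else is textbook partial-fraction asymptotics of a rational generating function, with the essential analytic input (simplicity and isolation of the dominant pole $p$) already provided by Lemma \ref{smallestroot}.
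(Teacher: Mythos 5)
Your proposal is correct and follows essentially the same route as the paper: reduce to $|\Lambda_n|=[x^n]\,1/f(x)$ via Lemma \ref{momentsofB}, multiply by $1-x$ to exhibit a rational function with a degree-$(M+1)$ polynomial denominator, invoke Lemma \ref{smallestroot} to isolate the simple dominant pole at $p$, and read off the main term plus an $O(n^{M-1}/r^n)$ error from partial fractions. You are a bit more explicit than the paper about the polynomial $Q$ and its leading coefficient, but the argument, the key lemma, and the resulting bound are the same.
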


\begin{proof}
By Lemma \ref{momentsofB} with $t=0$, 
\begin{equation*}
|\Lambda_n|=[x^n] \frac{1}{1-\sum\limits_{k\in S} x^k}= [x^n]  \frac{1}{f(x)}.
\end{equation*}
Observe that
\begin{equation*}
f(x)=1- \left( \sum_{k\in \mathbb{Z_+}} x^k - \sum_{k\in \bar{S}} x^k \right)=1-\frac{x}{1-x}+P_M(x)
\end{equation*}
where $M$ is the largest element of $\bar{S}$ and $P_M(x)$ signifies a polynomial of degree $M$.  Multiplying both sides by $1-x$, we have
\begin{equation*}
(1-x) f(x)=1-2x+P_{M+1}(x).
\end{equation*}
Therefore $(1-x)f(x)$ has exactly $M+1$ roots (one of which is $x=p$) and $\frac{1}{f(x)}= \frac{1-x}{(1-x)f(x)}$ \vspace{1mm} is rational. We can therefore apply standard methods for approximating the coefficients of rational generating functions. 

Define $r_i$ for $i=1,\dots,M$ to be the remaining roots of $f$, and set $k_i$ equal to the multiplicity of $r_i$.  Without loss of generality, assume $|r_i|\leq|r_{i+1}|$.  Then we use Lemma \ref{smallestroot} to write
\begin{align*}
|\Lambda_n|&=[x^n] \frac{1-x}{(1-x)f(x)}\\
&=[x^n] \frac{C_0}{x-p}+[x^n] \sum_i \left( \frac{C_{i,1}}{x-r_i}+ \frac{C_{i,2}}{(x-r_i)^2}+\cdots + \frac{C_{i,k_i}}{(x-r_i)^{k_i }} \right)
\end{align*}
where
\begin{equation*}
C_0=\lim_{x\to p} \frac{x-p}{1-\sum\limits_{k\in S} x^k}=\lim_{x\to p} \frac{1}{-\sum\limits_{k\in S} k x^{k-1}}= \frac{1}{-\sum\limits_{k\in S} k p^{k-1}}.
\end{equation*}
We can use the fact that
\begin{equation} \label{parfrac}
[x^n] \frac{C_{i,j}}{(x-r_i)^j}= [x^n] \frac{C_{i,j}}{(-r_i)^j} \left(1-\frac{x}{r_i}\right)^{-j}=\frac{C_{i,j}}{(-r_i)^j} \cdot \frac{1}{r_i^n} \cdot {n+j-1 \choose n}
\end{equation}
to obtain
\begin{equation*}
[x^n] \frac{C_0}{x-p}=\frac{C_0}{-p}\cdot \frac{1}{p^n}=\frac{1}{p^n \sum\limits_{k\in S} k p^k}
\end{equation*}
and 
\begin{equation*}
[x^n] \sum_i \left( \frac{C_{i,1}}{x-r_i}+ \frac{C_{i,2}}{(x-r_i)^2}+\cdots + \frac{C_{i,k_i}}{(x-r_i)^{k_i}} \right)=O\left( \frac{n^{k_1-1}}{|r_1|^{n}} \right).
\end{equation*}
Taking into account the fact that $k_1\leq M$, we have
\begin{equation*}
|\Lambda_n|=\frac{1}{p^n \sum\limits_{k\in S} kp^k} + O\left( \frac{n^{M-1}}{r^n} \right). \qedhere
\end{equation*}
\end{proof}

For future reference, we record the following corollary which is derived easily from Theorem \ref{numberofcomps}.

\begin{corollary} \label{numberofcomps_recip} 
For any real $q\in (p,r)$,  
\begin{equation*}
\frac{1}{|\Lambda_n|} =p^n\sum_{k\in S} k p^k  \left( 1+O\left( \left( \tfrac{p}{q} \right)^n\right) \right).
\end{equation*}
\end{corollary}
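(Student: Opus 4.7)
The plan is to rearrange the asymptotic formula of Theorem \ref{numberofcomps} into multiplicative form, absorb the polynomial factor in the error term by exploiting the fact that $q$ is \emph{strictly} less than $r$, and then invert using the first-order expansion of $1/(1+\epsilon)$.

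First I would factor the leading term out of the right-hand side of Theorem \ref{numberofcomps}, writing
\begin{equation*}
|\Lambda_n|=\frac{1}{p^n\sum_{k\in S}kp^k}\left(1+O\!\left(n^{M-1}\left(\tfrac{p}{r}\right)^{n}\right)\right),
\end{equation*}
since the constant $\sum_{k\in S}kp^k$ is positive and finite and may be absorbed into the $O(\cdot)$. The next step is to dispose of the polynomial factor $n^{M-1}$. Given any $q\in(p,r)$, one has $p/r<p/q<1$ and therefore $q/r<1$, so
\begin{equation*}
\frac{n^{M-1}(p/r)^n}{(p/q)^n}=n^{M-1}\left(\tfrac{q}{r}\right)^{n}\longrightarrow 0.
\end{equation*}
This shows $n^{M-1}(p/r)^n=O((p/q)^n)$, and hence $|\Lambda_n|=\frac{1}{p^n\sum_{k\in S}kp^k}\bigl(1+\delta_n\bigr)$ with $\delta_n=O((p/q)^n)$.

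Finally I would take reciprocals. Since $\delta_n\to 0$, for all sufficiently large $n$ one has $|\delta_n|<1/2$ and
\begin{equation*}
\frac{1}{1+\delta_n}=1+O(\delta_n)=1+O\!\left(\left(\tfrac{p}{q}\right)^n\right),
\end{equation*}
which, multiplied by $p^n\sum_{k\in S}kp^k$, gives exactly the stated formula. The whole argument is essentially bookkeeping; the only substantive point is the step that uses $q<r$ strictly in order to swallow the $n^{M-1}$ factor into a pure geometric error, and this is the only place one has to be careful.
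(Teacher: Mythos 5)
Your proof is correct, and it fills in the routine manipulation that the paper leaves implicit (the authors simply state that the corollary is ``derived easily from Theorem~\ref{numberofcomps}''). Factoring out the leading term, using the strictness of $q<r$ to absorb the $n^{M-1}$ factor into a geometric $O((p/q)^n)$, and then expanding $1/(1+\delta_n)$ is exactly the intended bookkeeping, and you have correctly flagged the one place where care is needed.
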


\section{Moments of log B}
In this section we combine generating function identities from \cite{Shapcott2,Shapcott1} with singularity analysis \cite{FOsinganal,Flajolet} to estimate the moments of $\log{\bf B}$.  Recall that $r$ is the magnitude of the second smallest root of $f$, that $0<p<1$, and that $p<r$.

\begin{theorem} \label{E(logB)}
There exist constants $a_1>0$ and $a_0$ such that, for any $q\in (p,\min(1,r))$, the expected value of the log product of parts is
\begin{equation*}
E_n(\log{\bf B})=a_1 n + a_0 + O\left(n\left( \tfrac{p}{q} \right)^n \right).
\end{equation*}
Specifically, $a_1=\frac{\sum (\log k) p^k}{\sum k p^k}$ and $a_0=\frac{\sum (\log k) p^k  \sum k^2 p^k}{\left( \sum k p^k \right)^2}- \frac{\sum k (\log k) p^k  }{ \sum k p^k}$ where all sums are over $k\in S$.
\end{theorem}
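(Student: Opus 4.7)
The plan is to view $E_n(\log {\bf B})$ as the $t$-derivative at $t=0$ of $E_n({\bf B}^t)$ and then apply Lemma \ref{momentsofB}. Since $\partial_t F(x,t)\big|_{t=0} = -\sum_{k\in S}(\log k)\, x^k$, differentiating under $[x^n]$ yields
\[
E_n(\log {\bf B}) \;=\; \frac{1}{|\Lambda_n|}\,[x^n]\frac{L(x)}{f(x)^2},\qquad L(x):=\sum_{k\in S}(\log k)\,x^k.
\]
The task then reduces to singularity analysis of the rational-type function $L(x)/f(x)^2$, combined with the expansion of $1/|\Lambda_n|$ supplied by Corollary \ref{numberofcomps_recip}.

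By Lemma \ref{smallestroot}, $p$ is a simple root of $f$ strictly smaller in magnitude than every other root, so $L(x)/f(x)^2$ has a double pole at $p$ and all remaining poles have magnitude at least $r$. Expanding $f(x) = f'(p)(x-p) + \tfrac{1}{2}f''(p)(x-p)^2 + O((x-p)^3)$ near $p$ and combining with the Taylor expansion of $L$ gives
\[
\frac{L(x)}{f(x)^2} \;=\; \frac{A}{(x-p)^2} \,+\, \frac{D}{x-p} \,+\, R(x),
\]
where $A = L(p)/f'(p)^2$, $D = L'(p)/f'(p)^2 - L(p)\,f''(p)/f'(p)^3$, and $R(x)$ collects the poles of $f^2$ at the remaining roots (each of multiplicity at most $2M$). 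Applying formula \eqref{parfrac} to each summand yields
\[
[x^n]\frac{L(x)}{f(x)^2} \;=\; \frac{A(n+1)}{p^{n+2}} \,-\, \frac{D}{p^{n+1}} \,+\, O\!\left(\tfrac{n^{2M-1}}{r^n}\right).
\]

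Now I would multiply by $|\Lambda_n|^{-1} = p^n\sum_{k\in S}k p^k\bigl(1+O((p/q)^n)\bigr)$ from Corollary \ref{numberofcomps_recip} and simplify using the identity $\sum_{k\in S}k p^k = -p f'(p)$. The leading term $A(n+1)/p^{n+2}$ collapses cleanly to $a_1(n+1)$. For the $-D/p^{n+1}$ term, the identity $\sum_{k\in S} k(k-1) p^k = \sum_{k\in S} k^2 p^k - \sum_{k\in S} k p^k$ converts the $f''(p)$-contribution into the stated form, and the piece reduces to exactly $a_0 - a_1$; adding the two contributions gives $a_1 n + a_0$. The polynomial-times-exponential error $O(n^{2M-1}(p/r)^n)$ is absorbed into $O(n(p/q)^n)$ for any $q\in(p,\min(1,r))$ because $r/q > 1$ dominates any polynomial factor.

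The main technical obstacle is the algebraic step of computing $D$ and then matching the resulting combination of $L(p), L'(p), f'(p), f''(p)$ against the closed form announced for $a_0$; doing this correctly requires carrying the expansion of $1/f(x)^2$ about $p$ to first order rather than merely identifying the leading residue. Everything else is routine singularity analysis.
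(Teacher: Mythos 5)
Your main term computation matches the paper's, but there is a genuine gap in how you justify the error bound. You treat $L(x)/f(x)^2$ as though it were a rational function with a partial fraction decomposition whose ``remaining poles'' sit at the other roots of $f$, and from this you derive an error of $O\bigl(n^{2M-1}/r^n\bigr)$. This cannot be correct as stated, because $L(x)=\sum_{k\in S}(\log k)x^k$ is not a polynomial: it is an infinite power series (the derivative of a polylogarithm plus a polynomial correction) whose radius of convergence is $1$, with a non-polar singularity at $x=1$. Consequently $L(x)/f(x)^2$ is not rational, has no poles at roots of $f$ lying outside the unit disk, and the partial-fraction picture you invoke simply does not apply. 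In particular, when $r>1$ the obstruction is the singularity of $L$ at $x=1$, which your analysis never mentions; your restriction $q<\min(1,r)$ at the very end is the right answer, but it is not supported by the reasoning that precedes it.

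The paper handles this by introducing $D(x)=f(x)/(x-p)$, which has no zeros in $|x|<r$, and defining $G_0(x)=L_1(x)/D(x)^2$, which is then analytic in the closed disk $|x|\leq q$ for any $q<\min(1,r)$ --- not rational, just analytic. Writing $G_0(x)=G_0(p)+G_0'(p)(x-p)+\tilde G_0(x)$ isolates the principal part of the double pole at $p$ exactly as you do (your $A=G_0(p)$ and $D=G_0'(p)$), but the tail $[x^n]\tilde G_0(x)/(x-p)^2$ is then bounded by Cauchy's inequality on the circle $|x|=q$, yielding $O(1/q^n)$. That estimate is agnostic to the nature of the singularity at $\min(1,r)$ and needs no structural claim about $R(x)$. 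If you replace your partial-fraction sentence and the $O\bigl(n^{2M-1}/r^n\bigr)$ bound with this Cauchy-estimate argument, the proof is sound and essentially identical to the paper's.
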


\begin{proof}
Define the moment generating function for the random variable $\log{\bf B}$,
\begin{equation*}
M_n(t)=E_n(e^{t\log{\bf B}})=E_n({\bf B}^t).
\end{equation*}
By Lemma \ref{momentsofB}, we have
\begin{equation*}
E_n({\bf B}^t)= \frac{1}{|\Lambda_n|} [x^n] \frac{1}{F(x,t)}.
\end{equation*}
Hence 
\begin{equation*}
E_n(\log{\bf B})=M'_n(0)= \frac{1}{|\Lambda_n|}\frac{d}{dt}  [x^n] \frac{1}{F(x,t)} \ \bigg|_{t=0}.
\end{equation*}
Since $\frac{1}{F(x,t)}$ is analytic throughout the disk $|x|<p$, we know that it has a Taylor series representation
\begin{equation*}
\frac{1}{F(x,t)}=\sum_{n=0}^\infty a_n x^n
\end{equation*}
where $a_n$ is given by
\begin{equation*}
a_n=\frac{1}{2\pi i} \int_C \frac{\frac{1}{F(x,t)}}{x^{n+1}} \ dx
\end{equation*}
for a suitable contour $C$.  Therefore 
\begin{equation*}
\frac{d}{dt}  [x^n]  \frac{1}{F(x,t)}  = \frac{d}{dt} a_n =\frac{d}{dt} \frac{1}{2\pi i} \int_C \frac{\frac{1}{F(x,t)}}{x^{n+1}} \ dx.
\end{equation*}
Since $\frac{1}{F(x,t)}$ and its partial derivative with respect to $t$ are both analytic throughout $|x|<p$, we can move the derivative inside the integral sign to obtain
\begin{equation*}
\frac{d}{dt} \frac{1}{2\pi i} \int_C \frac{\frac{1}{F(x,t)}}{ x^{n+1}} \ dx = \frac{1}{2\pi i} \int_C  \frac{ \frac{\partial}{\partial t} \frac{1}{F(x,t)}}{x^{n+1}} \ dx = [x^n] \frac{\partial}{\partial t} \frac{1}{F(x,t)}.
\end{equation*} 
Calculating the partial derivative and evaluating at $t=0$, we see that
\begin{equation*}
M_n'(0)=\frac{1}{|\Lambda_n|} [x^n] \frac{\sum\limits_{k\in S} (\log k) x^k}{\left( 1-\sum\limits_{k\in S} x^k \right)^2}.
\end{equation*}
To simplify this expression, we define $L_i(x)=\sum\limits_{k\in S} (\log k)^i x^k$ and $D(x)=\frac{f(x)}{x-p}$ and note that $D$ has no zeroes in the disk $|x|<r$.  Then
\begin{equation}
\label{MnprimetoG}
M_n'(0)=\frac{1}{|\Lambda_n|} [x^n] \frac{L_1(x)}{(x-p)^2 D(x)^2}.
\end{equation}

To estimate the right side of (\ref{MnprimetoG}), note that the function $G_0(x)=\frac{ L_1(x) }{D(x)^2}$ is analytic in the disk $|x|\leq q$ for any $q < \min(1,r)$.  If we expand $G_0(x)$ around $p$, we have
\begin{equation*}
G_0(x)=G_0(p)+G_0'(p)(x-p)+\tilde{G}_0(x)
\end{equation*}
where $\tilde{G}_0(x)=\sum\limits_{k=2}^\infty \frac{G_0^{(k)}(p)}{k!}(x-p)^k$.  Therefore
\begin{equation} \label{tildeG}
[x^n]\frac{G_{0}(x)}{(x-p)^{2}}= [x^n] \frac{G_0(p)}{(x-p)^2} + [x^n] \frac{G_0'(p)}{(x-p)}+ [x^n] \frac{\tilde{G}_{0}(x)}{(x-p)^{2}  }.
\end{equation}
We use (\ref{parfrac}) to obtain
\begin{equation*}
[x^n] \frac{G_0(p)}{(x-p)^2} =\frac{G_0(p)(n+1)}{p^2\cdot p^n}
\end{equation*}
and
\begin{equation*}
[x^n] \frac{G_0'(p)}{(x-p)} = \frac{G_0'(p)}{-p\cdot p^n} .
\end{equation*}

To bound the last term in (\ref{tildeG}), note that $\frac{\tilde{G}_{0}(x)}{(x-p)^{2}}$ is also analytic in the disk $|x|\leq q$ (with a removable singularity at $x=p$).  Choosing $\gamma$ to be a positively-oriented circle of radius $q$, centered at the origin, we use Cauchy's inequality to get
\begin{equation*}
\biggl| [x^n] \frac{\tilde{G}_{0}(x)}{(x-p)^{2}} \biggr|=\biggl| \frac{1}{2\pi i} \int_{\gamma}\frac{ \tilde{G}_{0}(x)}{(x-p)^{2} x^{n+1}} \ dx \biggr| \leq  \frac{\max \bigl| \frac{\tilde{G}_{0}(x)}{(x-p)^{2}}\bigr| } {q^{n}}=O\left( \tfrac{1}{q^n} \right). 
\end{equation*}
Combining this with (\ref{tildeG}) and Corollary \ref{numberofcomps_recip}, with the value of $q$ chosen in $(p,\min(1,r))$, we get
\begin{equation*}
E_n(\log{\bf B})=\frac{1}{|\Lambda_n|} [x^n] \frac{G_0(x)}{(x-p)^2} =\frac{G_0(p)\sum\limits_{k\in S} k p^k (n+1)}{p^2} +\frac{G_0'(p) \sum\limits_{k\in S} k p^k}{-p}+O\left(n\left( \tfrac{p}{q} \right)^n \right). 
\end{equation*}
Using the fact that $D^{(k)}(p)=\frac{f^{(k+1)}(p)}{k+1}$, we can evaluate the constants $G_0(p)$ and $G_0'(p)$ to obtain the statement of the theorem. 
\end{proof}

\begin{theorem} \label{V(logB)}
There exist constants $b_1>0$ and $b_0$ such that, for any $q\in (p,\min(1,r))$, the variance of the log product of parts is 
\begin{equation*}
V_n(\log{\bf B})=b_1 n + b_0 + O\left(n^2\left( \tfrac{p}{q} \right)^n \right).
\end{equation*}
Specifically, $b_1=  \frac{ \left( \sum (\log k)p^k \right)^2  \sum k^2p^k}{\left( \sum kp^k \right)^3}  -\frac{2 \sum (\log k)p^k \sum k (\log k)p^k}{\left( \sum kp^k \right)^2}+ \frac{ \sum (\log k)^2 p^k}{ \sum kp^k} $ and 

\noindent $b_0= \frac{2 \left( \sum (\log k)p^k \right)^2 \left( \sum k^2p^k \right)^2}{\left( \sum kp^k \right)^4 }   -\frac{ \left( \sum (\log k)p^k \right)^2 \sum k^3 p^k+4  \sum (\log k)p^k \sum k (\log k)p^k \sum k^2p^k}{\left( \sum kp^k \right)^3 } $

$\quad\quad\quad + \frac{\left( \sum k(\log k)p^k \right)^2 + \sum (\log k)^2p^k \sum k^2p^k +2 \sum (\log k)p^k \sum k^2 (\log k)p^k  }{\left( \sum kp^k \right)^2 }  -  \frac{ \sum k (\log k)^2p^k }{ \sum kp^k} $.
\end{theorem}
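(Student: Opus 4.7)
The plan is to follow the template of Theorem \ref{E(logB)}: write $V_n(\log\mathbf{B}) = M_n''(0) - (M_n'(0))^2$, where $M_n(t) = E_n(\mathbf{B}^t)$ is already controlled at the level of the first moment. The new work is to develop an expansion of $M_n''(0) = E_n((\log\mathbf{B})^2)$ of the same shape, and then carefully subtract.

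Differentiating $1/F(x,t)$ twice in $t$ and evaluating at $t=0$ gives
\[
\left.\frac{\partial^2}{\partial t^2}\frac{1}{F(x,t)}\right|_{t=0} = \frac{L_2(x)}{f(x)^2} + \frac{2\,L_1(x)^2}{f(x)^3}.
\]
The exchange of $d^2/dt^2$ with $[x^n]$ is justified by the Cauchy-integral argument already used in Theorem \ref{E(logB)}. Writing $f(x) = (x-p)D(x)$ with $D$ nonvanishing on $|x|\leq q$ for any $q\in(p,\min(1,r))$, the first summand becomes $G_1(x)/(x-p)^2$ with $G_1(x)=L_2(x)/D(x)^2$, handled by a first-order Taylor expansion about $p$ exactly as in (\ref{tildeG}). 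The second summand becomes $G_2(x)/(x-p)^3$ with $G_2(x)=2L_1(x)^2/D(x)^3$; here I would use a second-order expansion
\[
G_2(x) = G_2(p) + G_2'(p)(x-p) + \tfrac{1}{2}G_2''(p)(x-p)^2 + \tilde{G}_2(x),
\]
apply (\ref{parfrac}) to the three principal-part contributions (which yield coefficients of orders $n^2$, $n$, and $1$), and bound $\tilde{G}_2(x)/(x-p)^3$ by Cauchy's inequality on the circle $|x|=q$ to obtain $O(1/q^n)$. Combining with Corollary \ref{numberofcomps_recip} produces
\[
E_n((\log\mathbf{B})^2) = c_2\,n^2 + c_1\,n + c_0 + O\bigl(n^2(p/q)^n\bigr)
\]
for explicit constants $c_2,c_1,c_0$ expressible in terms of $G_1(p), G_1'(p), G_2(p), G_2'(p), G_2''(p)$ and the derivatives $f^{(k)}(p)$ via the identity $D^{(k)}(p) = f^{(k+1)}(p)/(k+1)$ already exploited in the previous proof.

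Squaring Theorem \ref{E(logB)} gives $(M_n'(0))^2 = a_1^2 n^2 + 2a_1 a_0\,n + a_0^2 + O\bigl(n^2(p/q)^n\bigr)$, so subtracting yields the variance. The crucial sanity check is the cancellation $c_2 = a_1^2$, which must hold since $V_n\geq 0$ for every $n$; I would verify it directly from $G_2(p) = 2L_1(p)^2/D(p)^3$ together with the identity $pD(p) = -\sum_{k\in S} kp^k$, so that both $c_2$ and $a_1^2$ reduce to $\bigl(\sum(\log k)p^k/\sum kp^k\bigr)^2$. After this cancellation one reads off $b_1 = c_1 - 2a_1 a_0$ and $b_0 = c_0 - a_0^2$. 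The main obstacle I anticipate is purely algebraic: computing $G_2'(p)$ and $G_2''(p)$ (which involves $L_1(p), L_1'(p)$ and $D(p), D'(p), D''(p)$) and then translating everything into the sums $\sum k^j p^k$ and $\sum k^j(\log k)^i p^k$ to recover the lengthy closed-form expression quoted in the statement. This step is bookkeeping-heavy but presents no new analytic difficulty beyond what has already been handled in Theorem \ref{E(logB)}.
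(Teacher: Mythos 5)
Your proposal follows exactly the same route as the paper's proof: differentiate $1/F(x,t)$ twice in $t$, factor $f(x)=(x-p)D(x)$, Taylor-expand the analytic parts $L_2/D^2$ and $L_1^2/D^3$ around $p$ (first and second order respectively), bound the remainders by Cauchy's inequality on $|x|=q$, combine with Corollary \ref{numberofcomps_recip}, and subtract $(M_n'(0))^2$ from Theorem \ref{E(logB)}. The only cosmetic difference is a swap in the labels $G_1, G_2$; your explicit sanity check that the $n^2$ coefficients cancel ($c_2=a_1^2$) is left implicit in the paper but is a reasonable addition.
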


\begin{proof}
By the same method as above, we obtain
\begin{align*}
M''_n(0)&= \frac{1}{|\Lambda_n|} [x^n] \left(  \frac{2\left( \sum\limits_{k\in S} (\log k)x^k \right)^2}{\left( 1-\sum\limits_{k\in S} x^k \right)^3}  +  \frac{\sum\limits_{k\in S} (\log k)^2 x^k}{\left( 1-\sum\limits_{k\in S} x^k \right)^2} \right)\\
&= \frac{1}{|\Lambda_n|} [x^n] \left( \frac{2 L_1(x)^2}{(x-p)^3 D(x)^3} +  \frac{L_2(x)}{(x-p)^2 D(x)^2} \right).
\end{align*}

Since the functions $G_1(x)=\frac{ L_1(x)^2} {D(x)^3}$ and $G_2(x)=\frac{L_2(x)}{D(x)^{2}}$ are both analytic in the disk $|x|\leq q<\min(1,r)$, we can expand them around $x=p$ as in the previous proof.  Hence, for the first term above, we have
\begin{align*}
[x^n] \frac{G_1(x)}{(x-p)^3}&= [x^n] \frac{G_1(p)}{(x-p)^3}+[x^n] \frac{G_1'(p)}{(x-p)^{2}} +[x^n] \frac{G_1''(p)}{2(x-p)}+ O\left(\tfrac{1}{q^n} \right) \\
&= \frac{G_1(p)(n+1)(n+2)}{-2p^3 \cdot p^n} + \frac{G_1'(p)(n+1)}{p^2 \cdot p^n} +\frac{G_1''(p)}{-2p \cdot p^n} + O\left(\tfrac{1}{q^n} \right) .
\end{align*}
Similarly, for the second term above, we have
\begin{align*}
[x^n] \frac{G_2(x)}{(x-p)^2}&=[x^n] \frac{G_2(p)}{(x-p)^2}+[x^n] \frac{G_2'(p)}{(x-p)}+O\left(\tfrac{1}{q^n} \right)\\
&=\frac{G_2(p) (n+1)}{p^2 \cdot p^n }+\frac{G_2'(p)}{-p\cdot p^n} +O\left(\tfrac{1}{q^n} \right).
\end{align*}
Now by Corollary \ref{numberofcomps_recip} we get
\begin{align*}
E_n((\log{\bf B})^2)&=\frac{1}{|\Lambda_n|} [x^n] \frac{2G_1(x)}{(x-p)^3} + \frac{1}{|\Lambda_n|} [x^n]  \frac{G_2(x)}{(x-p)^2}  \\
&=  \frac{G_1(p)\sum\limits_{k\in S} k p^k(n+1)(n+2)}{-p^3}  + \frac{2G_1'(p)\sum\limits_{k\in S} k p^k(n+1)}{p^2 } +\frac{G_1''(p)\sum\limits_{k\in S} k p^k}{-p } \\
&\hspace{1cm}+  \frac{G_2(p) \sum\limits_{k\in S} k p^k (n+1)}{p^2 }+\frac{G_2'(p)\sum\limits_{k\in S} k p^k }{-p}  + O\left(n^2 \left( \tfrac{p}{q}\right)^n \right) .
\end{align*}
These constants can be evaluated as in the previous proof, and $E_n(\log{\bf B})^2$ can be calculated using Theorem \ref{E(logB)}, to obtain the statement of the theorem.
\end{proof}

In theory, any moment of $\log{\bf B}$ can be calculated using the methods above.  However, due to the length and messiness of the calculations, we record the following result without proof.
More details can be found in \cite{Shapcott1}.
\begin{theorem} \label{4th}
Define $R_n$ to be the fourth central moment with respect to $P_n$.  Then
\begin{equation*}
R_n(\log{\bf B})=E_n( (\log{\bf B} - E_n(\log{\bf B}))^4)=O(n^2).
\end{equation*}
\end{theorem}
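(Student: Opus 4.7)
My plan is to mimic the template of Theorems \ref{E(logB)} and \ref{V(logB)}: compute $M_n^{(k)}(0) = E_n((\log\mathbf{B})^k)$ for $k = 3$ and $k = 4$ by differentiating $M_n(t) = E_n(\mathbf{B}^t) = \frac{1}{|\Lambda_n|}[x^n]\frac{1}{F(x,t)}$ in $t$, then assemble the central moment via
\begin{equation*}
R_n = E_n((\log\mathbf{B})^4) - 4 E_n(\log\mathbf{B})\, E_n((\log\mathbf{B})^3) + 6 (E_n(\log\mathbf{B}))^2 E_n((\log\mathbf{B})^2) - 3(E_n(\log\mathbf{B}))^4.
\end{equation*}

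Each differentiation of $\frac{1}{F(x,t)}$ in $t$ produces rational combinations of $L_i(x) = \sum_{k \in S}(\log k)^i x^k$ divided by powers of $f(x) = F(x,0)$; for instance, $M_n^{(4)}(0)$ has leading term $\frac{24 L_1(x)^4}{f(x)^5}$ together with corrections involving $L_1^2 L_2$, $L_1 L_3$, $L_2^2$, and $L_4$ divided by lower powers of $f$. Writing $f(x) = (x-p)D(x)$ with $D$ non-vanishing in $|x| \leq q$ for any $q \in (p,\min(1,r))$, each summand has a pole of order at most $k+1$ at $x = p$. Expanding the analytic factor around $x = p$ and applying \eqref{parfrac}, exactly as in the proofs of Theorems \ref{E(logB)} and \ref{V(logB)}, extracts an explicit polynomial in $n$ of degree $k$ plus an $O(n^{k-1}(p/q)^n)$ error. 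Combined with Corollary \ref{numberofcomps_recip}, this gives $E_n((\log\mathbf{B})^k) = a_1^k n^k + C_{k,k-1} n^{k-1} + \cdots$ with explicit constants. Substituting into the identity above and expanding as a polynomial in $n$ delivers the result, provided the $n^4$ and $n^3$ contributions cancel.

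The main obstacle is exactly this cancellation, which is guaranteed on general grounds (a central moment discards the $\mu^k$ part of $E(X^k)$) but gives rise to a long, error-prone computation. A cleaner route I would take in practice is to argue via cumulants. The same singularity analysis shows that, for $t$ in a small neighborhood of $0$,
\begin{equation*}
M_n(t) = \Phi(t)\, e^{n \Psi(t)}\bigl(1 + O(e^{-cn})\bigr),
\end{equation*}
where $\Psi(t) = \log p - \log p(t)$ and $\Phi(t)$ is a residue factor, both analytic in $t$ by the implicit function theorem, since $F_x(p,0) = -\sum_{k\in S} k\, p^{k-1} \neq 0$. Taking logarithms gives $\log M_n(t) = \log \Phi(t) + n\Psi(t) + O(e^{-cn})$, an affine function of $n$ with analytic coefficients in $t$, so every cumulant $\kappa_j = (\log M_n)^{(j)}(0)$ is $O(n)$. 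The standard identity $\mu_4 = \kappa_4 + 3\kappa_2^2$ then immediately yields $R_n = O(n) + 3 \cdot O(n)^2 = O(n^2)$, with no explicit cancellation required.
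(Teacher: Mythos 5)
The paper records this theorem without proof, deferring to Shapcott's thesis and saying only that it can be obtained ``by the methods above,'' i.e.\ the direct moment computation your first paragraph sketches: expand $M_n^{(3)}(0)$ and $M_n^{(4)}(0)$ by partial fractions as in Theorems \ref{E(logB)} and \ref{V(logB)}, get polynomials in $n$ of degrees $3$ and $4$, and check that the $n^4$ and $n^3$ terms cancel in the central moment. Your second route---the cumulant/quasi-power argument---is correct and considerably cleaner; notably, the authors' own Comments section flags Hwang's quasi-powers theorem as ``a possible alternative approach to our problem,'' so you have in effect reconstructed the route they deliberately set aside. To make the cumulant step airtight you would add two small remarks: (i) $F(\cdot,t)$ is not rational for $t\ne 0$, so the existence of a unique dominant zero $p(t)$ of $F(\cdot,t)$ in a disk $|x|<q<\min(1,r)$ for $t$ near $0$ requires a Rouch\'e/Hurwitz-type argument ($F(\cdot,t)\to f$ uniformly on $|x|=q$ as $t\to 0$, and $f$ has exactly one zero in that disk); (ii) the $O(e^{-cn})$ correction in $M_n(t)=\Phi(t)e^{n\Psi(t)}(1+O(e^{-cn}))$ is analytic in $t$ and uniformly small on a fixed disk about $t=0$, so Cauchy's estimates show its contribution to each $\kappa_j$ at $t=0$ is also $O(e^{-cn})$; hence $\kappa_j(n)=(\log\Phi)^{(j)}(0)+n\,\Psi^{(j)}(0)+O(e^{-cn})=O(n)$, and $R_n=\kappa_4+3\kappa_2^2=O(n^2)$. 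The payoff over the direct method is exactly what you identify: you never have to verify explicitly that the $n^4$ and $n^3$ terms cancel among $E_n((\log{\bf B})^4)$, $E_n(\log{\bf B})\,E_n((\log{\bf B})^3)$, $(E_n(\log{\bf B}))^2 E_n((\log{\bf B})^2)$, and $(E_n(\log{\bf B}))^4$---the messy bookkeeping the paper silently outsources to the thesis.
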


\section{Method of concatenated compositions}

In this section we present a method for breaking down a composition consisting of a random number of parts into a sequence consisting of a deterministic number of subcompositions (of approximately the same size).  The approach is stylistically similar to Bernstein's blocking method, which separates a sequence of dependent random variables into an alternating sequence of ``large blocks'' and ``small blocks.''   Before giving a precise, notation-laden version of the technique, we give an informal description in terms of the classical bijection between compositions of $n$ and sequences of $n$ balls colored white or black with the last ball black.  (A composition $\vec{\lambda}=(\lambda_1,\lambda_2,\dots)$ corresponds to the sequence of $n$ colored balls in which the position of the $i$'th black ball is $\sum\limits_{k=1}^{i}\lambda_{k}$.)

A method that does not quite work is the following: form a sequence of $m+1$ compositions by using the first $\lfloor \tfrac{n}{m} \rfloor$ balls as the first composition, the second $\lfloor \tfrac{n}{m} \rfloor $ balls as the second composition, etc.
The main problem with this approach is that the ball in position $k\lfloor\frac{n}{m}\rfloor$ need not be colored black, and consequently the $k$'th sequence of $\lfloor \tfrac{n}{m} \rfloor$ balls need not correspond to a composition.

To salvage this idea, we make use of a simple observation: there must  be {\sl some} part of $\vec{\lambda}$ corresponding to the ball in position $k\lfloor \tfrac{n}{m} \rfloor$.  Let $\Pi_{0}$ be the composition that is formed by selecting these $m$   regularly spaced parts of $\vec{\lambda}$.  The $k$'th part of $\Pi_{0}$ is the part of $\vec{\lambda}$ that contains the ball in position $k\lfloor\frac{n}{m}\rfloor$.  Then the parts of $\Pi_{0}$ form natural dividers for decomposing $\vec{\lambda}$.  As an example, suppose $m=4$ and consider the composition $\vec{\lambda}=(3,2,3,1,2,2,2,3,2,2,2,1)$ that corresponds to the sequence shown here:
\vspace{-2mm}
\begin{center}
\setlength{\unitlength}{1mm}
\begin{picture}(120,2)
\put(5,1){\circle{2}}
\put(10,1){\circle{2}}
\put(15,1){\circle*{2}}
\put(20,1){\circle{2}}
\put(25,1){\circle*{2}}
\put(30,1){\circle{2}}
  \put(35,1){\circle{2}}
\put(40,1){\circle*{2}}
 \put(40,1){\circle*{2}}
\put(45,1){\circle*{2}}
\put(50,1){\circle{2}}
\put(55,1){\circle*{2}}
\put(60,1){\circle{2}}
\put(65,1){\circle*{2}}
\put(70,1){\circle{2}}
\put(75,1){\circle*{2}}
\put(80,1){\circle{2}}
\put(85,1){\circle{2}}
\put(90,1){\circle*{2}}
\put(95,1){\circle{2}}
\put(100,1){\circle*{2}}
\put(105,1){\circle{2}}
\put(110,1){\circle*{2}}
\put(115,1){\circle{2}}
\put(120,1){\circle*{2}}
\put(125,1){\circle*{2}}
\end{picture}
\end{center}
Note that $\lfloor \frac{n}{m}\rfloor= \lfloor \frac{25}{4}\rfloor=6$. The balls at positions 6, 12, 18, and 24 (marked below with arrows) belong to parts of $\vec{\lambda}$ with respective sizes 3, 2, 3, and 2.  Therefore $\Pi_{0}=(3,2,3,2)$.  Circled below are the  balls that correspond to the parts of $\Pi_{0}$: 
\begin{center}
\setlength{\unitlength}{1mm}
\begin{picture}(120,8)
\put(5,1){\circle{2}}
\put(10,1){\circle{2}}
\put(15,1){\circle*{2}}
\put(20,1){\circle{2}}
\put(25,1){\circle*{2}}
\put(30,1){\circle{2}}
\put(30,1){\circle{3}}
  \put(35,1){\circle{2}}
 \put(35,1){\circle{3}}
\put(40,1){\circle*{2}}
\put(40,1){\circle{3}}
 \put(40,1){\circle*{2}}
\put(45,1){\circle*{2}}
\put(50,1){\circle{2}}
\put(55,1){\circle*{2}}
\put(60,1){\circle{2}}
\put(60,1){\circle{3}}
\put(65,1){\circle*{2}}
\put(65,1){\circle{3}}
\put(70,1){\circle{2}}
\put(75,1){\circle*{2}}
\put(80,1){\circle{2}}
\put(80,1){\circle{3}}
\put(85,1){\circle{2}}
\put(85,1){\circle{3}}
\put(90,1){\circle*{2}}
\put(90,1){\circle{3}}
\put(95,1){\circle{2}}
\put(100,1){\circle*{2}}
\put(105,1){\circle{2}}
\put(110,1){\circle*{2}}
\put(115,1){\circle{2}}
\put(115,1){\circle{3}}
\put(120,1){\circle*{2}}
\put(120,1){\circle{3}}
\put(125,1){\circle*{2}}
\put(30,8){\vector(0,-1){3}}
\put(60,8){\vector(0,-1){3}}
\put(90,8){\vector(0,-1){3}}
\put(120,8){\vector(0,-1){3}}
\end{picture}
\end{center}
If we remove all the balls that correspond to the parts of $\Pi_{0}$, then we are left with a sequence of five compositions $\Pi_{1},\Pi_{2},\Pi_{3},\Pi_{4}, \Pi_{5}$ (of various sizes) as shown here:
\begin{center}
\setlength{\unitlength}{1mm}
\begin{picture}(120,10)
\put(5,5){\circle{2}}
\put(10,5){\circle{2}}
\put(15,5){\circle*{2}}
\put(20,5){\circle{2}}
\put(25,5){\circle*{2}}
\put(45,5){\circle*{2}}
\put(50,5){\circle{2}}
\put(55,5){\circle*{2}}
\put(70,5){\circle{2}}
\put(75,5){\circle*{2}}
\put(95,5){\circle{2}}
\put(100,5){\circle*{2}}
\put(105,5){\circle{2}}
\put(110,5){\circle*{2}}
\put(125,5){\circle*{2}}
\put(13,0){$\Pi_{1}$}
\put(48,0){$\Pi_{2}$}
\put(70,0){$\Pi_{3}$}
\put(100,0){$\Pi_{4}$}
\put(123,0){$\Pi_{5}$}
\end{picture}
\end{center}
In order for this decomposition to  be well-defined, it is necessary to bound the sizes of the parts.  (Consider what happens if $\vec{\lambda}$ is a single part of size $n$.)

We now proceed with a more formal specification of the decomposition process.  For any positive integers $\beta$ and $n$, let $\Abeta$ denote the set of compositions of $n$ whose parts are all in $[1,\beta] \cap S$.  Let $m$ be a positive integer such that $\lfloor \tfrac{n}{m} \rfloor > 2\beta$.  For each $\vec{\lambda}$ in $\Abeta$ and for $i=1,\dots,m$, define 
\begin{equation*}
\tau_i=\min \{ t: \sum\limits_{k=1}^t \lambda_k \geq  i \lfloor \tfrac{n}{m} \rfloor \}.
\end{equation*}
Let $\tau$ be the total number of parts of the composition $\vec{\lambda}$.  In our example, $\tau_1=3$, $\tau_2=6$, $\tau_3=8$, $\tau_4=11$, and $\tau=12$.  Define the following compositions:
\begin{align*}
&\Pi_0= \langle \lambda_{\tau_{j}}\rangle_{j=1}^{m}\\            
&\Pi_1= \langle \lambda_{j}\rangle_{j=1}^{\tau_{1}-1}\\ 
&\Pi_i= \langle \lambda_{j}\rangle_{j=\tau_{i-1}+1}^{\tau_{i}-1}\quad \text{ for } i=2\leq i\leq m.  
\end{align*}
If $n$ is not a multiple of $m$, define $\Pi_{m+1}= \langle \lambda_{j}\rangle_{j=\tau_{m}+1}^{\tau}$.  If $n$ is a multiple of $m$, then $\tau_{m}=n$ and we do not need an $(m+1)$'st composition.  However, it will simplify the presentation if we adopt the convention, when $n$ is a multiple of $m$, that $\Pi_{m+1}$ is an ``empty composition of zero with no parts'' and that ${\bf B}(\Pi_{m+1})=1$.  This completes our definition of what it means to decompose a composition in $\Abeta$ when $m\in {\mathbb Z}_{+}$  and $\lfloor \tfrac{n}{m} \rfloor > 2\beta$. 

As a byproduct of the decomposition process, we have a natural way to partition $\Abeta$.  This is important for our proof, because it enables us to write $\log{\bf B}$ as a sum of conditionally independent random variables.  Let $p_{1}=1$, and for $2\leq i\leq  m+1$ let $p_{i}=1+\sum\limits_{k=1}^{i-1}\lambda_{k}$ denote the position of the first ball that corresponds to $\Pi_{i}$.  Define $W_i=(\pi_i,p_i)$, where $\pi_i=|\Pi_{i}|$ is the number that $\Pi_{i}$ composes.  Finally, let  $\vec{W}=(W_1,\dots,W_{m+1})$.  Note that $\vec{W}$ is determined by $\vec{\lambda}$, but that many compositions correspond to a given choice of $\vec{W}$.  Define an equivalence relation on $\Abeta$ by declaring two compositions to be equivalent if and only if they determine the same $\vec{W}$.  Let $\Lambda_{\vec{W}}$ be the equivalence class of compositions in $\Abeta$ that correspond to a given choice of $\vec{W}$, and let ${\cal W}_{n}$ be the set of equivalence classes. 

Now observe that
\begin{equation}
\log {\bf B}(\vec{\lambda})=\sum\limits_{i=0}^{m+1} \log {\bf B}(\Pi_{i}(\vec{\lambda})).
\end{equation}
The next theorem says that the random variables $\logBi=\log {\bf B}(\Pi_{i}(\vec{\lambda}))$ are conditionally independent given $\vec{W}$.

\begin{theorem} \label{independence}
If $Q_{n}$ is the uniform probability measure on $\Abeta$, then for all $\vec{W}$ and all $y_{1},y_{2},\dots ,y_{m+1}$,
\begin{equation*}
Q_{n}\left( \forall i \ \logBi=y_{i} | \vec{W}\right)= \prod\limits_{i=1}^{m+1}Q_{n}(\logBi=y_{i} | \vec{W}).
\end{equation*}
\end{theorem}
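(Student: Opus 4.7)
The plan is to exhibit a natural bijection between the equivalence class $\Lambda_{\vec{W}}$ and the Cartesian product $\prod_{i=1}^{m+1} \Lambda_{\pi_i}^{\beta}$; once this is established, the theorem reduces to the fact that the uniform measure on a product of finite sets is a product of uniform measures, under which the coordinate variables are independent.

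First I would verify that $\vec{W}$ already determines $\Pi_{0}$. The data $(\pi_i, p_i)_{i=1}^{m+1}$ pin down the exact positions that each sub-composition $\Pi_i$ occupies in the ball diagram, so the only part of $\vec{\lambda}$ that can lie between the last ball of $\Pi_i$ and the first ball of $\Pi_{i+1}$ has size $p_{i+1} - p_i - \pi_i$. For $i = 1,\ldots,m$ these sizes are precisely the parts of $\Pi_0$ (namely the $\lambda_{\tau_i}$). Hence $\Pi_0$ is a deterministic function of $\vec{W}$ and contributes a constant to any event on the conditioning set.

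Next I would show that the map $\Phi : \vec{\lambda} \mapsto (\Pi_1(\vec{\lambda}),\ldots,\Pi_{m+1}(\vec{\lambda}))$ restricts to a bijection from $\Lambda_{\vec{W}}$ onto $\prod_{i=1}^{m+1} \Lambda_{\pi_i}^{\beta}$. Injectivity is immediate: knowing $\vec{W}$ and each $\Pi_i$ one recovers $\vec{\lambda}$ uniquely by interleaving the sub-compositions with the deterministic parts of $\Pi_0$. For surjectivity I would take an arbitrary tuple $(\Pi_1,\ldots,\Pi_{m+1}) \in \prod_i \Lambda_{\pi_i}^{\beta}$, assemble $\vec{\lambda}$ by the same interleaving, and check three things: the total equals $n$, every part lies in $[1,\beta]\cap S$, and the stopping times $\tau_i$ computed from the assembled $\vec{\lambda}$ actually land on the divider parts supplied by $\Pi_0$, so that $\Phi(\vec{\lambda})$ returns the given tuple with the correct $\vec{W}$.

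With the bijection in hand, the conclusion is routine: $Q_n$ is uniform on $\Lambda_n^{\beta}$, so its conditional law given $\vec{W}$ is uniform on $\Lambda_{\vec{W}}$; transporting through $\Phi$ yields the product of the uniform measures on each $\Lambda_{\pi_i}^{\beta}$, under which the coordinate random variables $\Pi_1,\ldots,\Pi_{m+1}$ are independent. Since $\mathbf{L}_i = \log \mathbf{B}(\Pi_i)$ depends on the $i$-th coordinate alone, the claimed conditional independence follows at once. The main obstacle is the surjectivity verification: one must confirm that no hidden constraint beyond ``parts in $[1,\beta]\cap S$ with prescribed sum $\pi_i$'' is silently imposed on each $\Pi_i$ by the definition of $\tau_i$. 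This is exactly where the standing hypothesis $\lfloor n/m\rfloor > 2\beta$ earns its keep: it prevents any sub-composition from being large enough to swallow two consecutive thresholds, and forces each threshold $i\lfloor n/m\rfloor$ to fall inside the intended divider part no matter how the $\Pi_i$ are chosen.
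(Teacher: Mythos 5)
Your proof takes essentially the same approach as the paper: both rest on exhibiting the bijection $\Psi_{\vec{W}}$ (your $\Phi$) from $\Lambda_{\vec{W}}$ onto $\prod_{i=1}^{m+1}\Lambda_{\pi_i}^{\beta}$, and then using the fact that uniform measure on a product of finite sets is a product of uniforms under which coordinates are independent. The paper simply asserts the bijection as ``obvious'' and then writes out the conditional-probability computation explicitly, whereas you argue more conceptually via transport of measure and say a bit more about injectivity and surjectivity, but the underlying argument is identical.
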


\begin{proof}
For each choice of $\vec{W}$, there is an obvious  bijection $\Psi_{\vec{W}}$ from $\Lambda_{\vec{W}}$ onto the product $\Lambda_{\pi_1}^\beta \times \Lambda_{\pi_2}^\beta  \dots \times \Lambda_{\pi_{m+1}}^\beta$, namely  
\begin{equation*}
\Psi_{\vec{W}}(\vec{\lambda})=(\Pi_1, \dots ,\Pi_{m+1}).
\end{equation*} 
Hence
\begin{equation} \label{numW}
|\Lambda_{\vec{W}}| = \prod\limits_{i=1}^{m+1}|\Lambda_{\pi_i}^{\beta}|.
\end{equation}
For any choice of $y_1,\dots,y_{m+1}$, we have
\begin{align*} 
Q_{n}\left( \forall i \ \logBi=y_{i} | \vec{W}\right)
&= \frac{ |\lbrace \vec{ \lambda}\in \Lambda_{\vec{W}} :\forall i \ \logBi=y_{i} \rbrace | }{ |\Lambda_{\vec{W}}|}  \\
&= \frac{ |\lbrace(\Pi_1,\dots,\Pi_{m+1}) : \forall i\  \logBi=y_{i} \rbrace | }{ |\Lambda_{\vec{W}}|}  \\
&=\frac{\prod\limits_{i=1}^{m+1} | \lbrace  \Pi_i: \logBi =y_i \rbrace |} {|\Lambda_{\vec{W}}| }  .
\end{align*}
Recalling (\ref{numW}) then multiplying by a factor of 1, we get
\begin{align*}
Q_{n}\left( \forall i \ \logBi=y_{i} | \vec{W}\right)&=\prod\limits_{i=1}^{m+1} \frac{ | \lbrace  \Pi_i:\logBi=y_i \rbrace | \cdot \prod\limits_{j\not=i} |\Lambda_{\pi_j}^{\beta} | } { |\Lambda_{\pi_i}^{\beta}| \cdot \prod\limits_{j\not=i} |\Lambda_{\pi_j}^{\beta} |}  \\
&=\prod\limits_{i=1}^{m+1}  \frac{ |\lbrace \vec{ \lambda} \in \Lambda_{\vec{W}} :\logBi=y_i \} |}{    |\Lambda_{\vec{W }}|  } \\
& =\prod\limits_{i=1}^{m+1} Q_n( \logBi=y_i | \vec{W}    ).  \qedhere
\end{align*}
\end{proof}
Although we do not need it, it is worth mentioning a stronger statement that is perhaps more intuitive.  The following are equivalent methods for picking a random composition:
\begin{description}
\item{Method 1:} Pick a composition $\vec{\lambda}$ from a uniform distribution on the set of all compositions in $\Abeta$ with a given $\vec{W}$.
\item{Method 2:} For each $i\geq 1$, independently pick $\Pi_i$. The numbers $\pi_i$ are determined by $\vec{W}$, and 
each $\Pi_i$ is chosen from a uniform distribution on the set of compositions in $\Lambda_{\pi_i}^\beta$.  Then concatenate $\Pi_{1}, \Pi_{2},\dots \Pi_{m+1}$, using the parts of $\Pi_{0}$ as dividers, to form a composition $\vec{\lambda}$.
\end{description}

Finally, for future reference we state a simple lemma that is obvious from the construction in this section and can be proved using calculations similar to those in the proof of Theorem \ref{independence}. 
\begin{lemma} \label{momentsAbetaPi} 
Consider $\logBi=\log{\bf B}(\Pi_i(\vec{\lambda}))$ as a random variable on $\Abeta$ with respect to the conditional probability measure $P_{n}(\cdot |\Abeta )$.  Assume $\beta$ is chosen in such a way that $\vec{W}$ is well-defined
 and $\Lambda_{\vec{W}}\not=\emptyset$.  If $i\leq m$, then for any choice of $t$,
\begin{equation*}
E_n( \logBi^t |\Abeta ,  \vec{W} )=E_{\pi_{i}}( (\log{\bf B})^t |\Abeta ).
\end{equation*}
\end{lemma}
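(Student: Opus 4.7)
The plan is to leverage the bijection $\Psi_{\vec{W}}: \Lambda_{\vec{W}} \to \Lambda_{\pi_1}^\beta \times \cdots \times \Lambda_{\pi_{m+1}}^\beta$ introduced in the proof of Theorem~\ref{independence}, together with the product formula (\ref{numW}). Under $P_n(\cdot \mid \Abeta,\vec{W})$, the composition $\vec{\lambda}$ is uniformly distributed over $\Lambda_{\vec{W}}$, so pushing this measure through $\Psi_{\vec{W}}$ shows that $(\Pi_1(\vec{\lambda}),\dots,\Pi_{m+1}(\vec{\lambda}))$ is uniform on the product space. In particular, the marginal law of $\Pi_i$ is uniform on $\Lambda_{\pi_i}^\beta$, which is exactly the sample space associated with $E_{\pi_i}(\,\cdot \mid \Abeta)$.

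Concretely, I would expand
\begin{equation*}
E_n(\logBi^t \mid \Abeta, \vec{W}) = \frac{1}{|\Lambda_{\vec{W}}|} \sum_{\vec{\lambda}\in\Lambda_{\vec{W}}} \bigl(\log{\bf B}(\Pi_i(\vec{\lambda}))\bigr)^t
\end{equation*}
and reindex the sum via $\Psi_{\vec{W}}$, treating $\vec{\lambda}$ as a tuple $(\Pi_1,\dots,\Pi_{m+1})$ in $\prod_j \Lambda_{\pi_j}^\beta$. Since the summand depends only on the $i$-th coordinate, summing over the remaining coordinates contributes a factor $\prod_{j\ne i}|\Lambda_{\pi_j}^\beta|$. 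Combined with (\ref{numW}), this factor cancels all of $|\Lambda_{\vec{W}}|$ except $|\Lambda_{\pi_i}^\beta|$, leaving
\begin{equation*}
\frac{1}{|\Lambda_{\pi_i}^\beta|}\sum_{\Pi_i\in\Lambda_{\pi_i}^\beta}\bigl(\log{\bf B}(\Pi_i)\bigr)^t,
\end{equation*}
which is by definition $E_{\pi_i}((\log{\bf B})^t \mid \Abeta)$.

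The argument closely mirrors the chain of equalities in the proof of Theorem~\ref{independence}, with the joint event $\{\forall j,\ \mathbf{L}_j = y_j\}$ replaced by the single-coordinate functional $\logBi^t$; there is no serious obstacle beyond bookkeeping. The hypothesis $i \le m$ is invoked only to ensure that $\Pi_i$ is a genuine \emph{interior} block, so that the image coordinate of $\Psi_{\vec{W}}$ ranges over all of $\Lambda_{\pi_i}^\beta$; the trailing block $\Pi_{m+1}$ can be empty (by the convention introduced after the definition of $\Pi_{m+1}$) and would require separate notational care.
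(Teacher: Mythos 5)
Your proof is correct and follows exactly the route the paper points to: the paper declines to write out a proof, remarking only that the lemma ``can be proved using calculations similar to those in the proof of Theorem~\ref{independence},'' and you carry out precisely that calculation by reindexing the sum defining $E_n(\logBi^t\mid\Abeta,\vec{W})$ through the bijection $\Psi_{\vec{W}}$ and cancelling against (\ref{numW}). Your concluding remark about the role of the hypothesis is reasonable; the only nit is that the guarantee (\ref{pi_error}) that $\pi_i>0$ (so that $E_{\pi_i}$ makes sense at all) is the cleanest reason to restrict to $1\le i\le m$, with $\Pi_0$ being deterministic given $\vec{W}$ and $\Pi_{m+1}$ potentially empty.
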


\section{Comparing conditional distributions}
Recall that $\log{\bf B}=\sum\limits_{i=0}^{m+1} \logBi$.  Reasoning heuristically, one might expect Theorem \ref{normal} to be a consequence of  the central limit theorem.  There are two problems with this idea.  

First, the random variables $\logBi$ are defined on $\Abeta$, not $\Lambda_{n}$; it  does not make sense to talk about the probability measure $P_n$ in reference to $\logBi$. This problem is only a minor technicality because most compositions have no large parts, and we can, without loss of generality, reduce to the case of compositions selected from  $\Abeta$ using the uniform probability measure $Q_{n}(\cdot)=P_{n}(\cdot |\Abeta )$.

The second problem is that the random variables $\logBi$ are not independent with respect to the probability measure $Q_{n}(\cdot)$.  However, we can use Theorem \ref{independence} to obtain the necessary conditions.  Conditioning on our choice of $\vec{W}$, we have
\begin{equation}  \label{T_2B}
P_n\left( \frac{\log{\bf B} - \mu_n}{\sigma_n}  \leq x\biggr| \Abeta  \right)=\sum_{\vec{W}\in {\cal W}_n} P_n(\vec{W}) P_n\left( \frac{\sum\limits_{i=0}^{m+1} \logBi - \mu_n}{\sigma_n}  \leq x \biggr|  \Abeta, \vec{W} \right). 
\end{equation}
The central limit theorem can be applied $ |{\cal W}_{n}|$ times: once for each of the terms in the right hand side of (\ref{T_2B}), using a different  probability measure  $P_{n}(\cdot |\Abeta , \vec{W} )$ each time.  This may not seem like a promising approach, since a mixture of normal distributions is not necessarily normal.  However, in our case, the normal distributions all have approximately the same mean and variance, so we do in fact get the desired result.

The preceding paragraphs contain the main idea of the proof.  However the reasoning is necessarily vague and incomplete.  The remainder of this section contains a series of elementary lemmas that are needed before a mathematically sound version of the argument can be completed in the next section.

We need a precise statement of the fact that a typical composition has no large parts.  The following crude first moment estimate is convenient for our purposes; more in-depth studies have been carried out by others, leading to more precise \cite{OdlRichComps} and more general \cite{Bender1} results.  First we set the parameters $m$ and $\beta$:
\begin{equation} \label{mbeta}
m= \lfloor \tfrac{n^{1/3}}{(5\log_{1/p} n)^{2/3}} \rfloor  \quad\quad\quad   \beta=\lfloor 5\log_{1/p} n \rfloor.
\end{equation}

\begin{lemma}  \label{Abeta}
Let $\Abeta $ be the event that an $S$-restricted composition of $n$ has no parts of size larger than $\beta$.  Then
\begin{equation*}
P_n( \overline{\Abeta}  )=O(np^\beta).
\end{equation*}
\end{lemma}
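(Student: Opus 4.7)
The plan is a first-moment estimate via Markov's inequality. Let $U_\beta(\vec\lambda)$ denote the number of parts of $\vec\lambda$ that exceed $\beta$. Since $\overline{\Abeta}$ is exactly the event $\{U_\beta\geq 1\}$, Markov gives $P_n(\overline{\Abeta})\leq E_n(U_\beta)$, so it suffices to show $E_n(U_\beta)=O(np^\beta)$.

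To compute $E_n(U_\beta)$, I would use the standard symbolic decomposition of a composition with one distinguished ``large'' part: an $S$-restricted composition on the left, a marked part $k\in S$ with $k>\beta$, and an $S$-restricted composition on the right. This yields
\[
\sum_{\vec\lambda\in\Lambda_n} U_\beta(\vec\lambda)=[x^n]\,\frac{T_\beta(x)}{f(x)^2},\qquad T_\beta(x):=\sum_{k\in S,\,k>\beta} x^k,
\]
so $E_n(U_\beta)=\tfrac{1}{|\Lambda_n|}[x^n]\tfrac{T_\beta(x)}{f(x)^2}$, which is an analogue of Lemma \ref{momentsofB} with the statistic ``number of parts exceeding $\beta$'' in place of the product.

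I would then perform singularity analysis on $T_\beta(x)/f(x)^2=T_\beta(x)/\bigl((x-p)^2 D(x)^2\bigr)$ exactly in the style of Theorems \ref{numberofcomps}, \ref{E(logB)}, and \ref{V(logB)}, where as before $D(x):=f(x)/(x-p)$ is nonvanishing in a disk $|x|\leq q$ with $q\in(p,\min(1,r))$. Writing $G(x):=T_\beta(x)/D(x)^2$ and Taylor-expanding about $p$, the coefficient extraction produces a leading term of order $G(p)(n+1)/p^{n+2}$, a subleading term of order $G'(p)/p^n$, and a remainder of order $q^{-n}$. The key observation is that $T_\beta(p)=\sum_{k>\beta}p^k=O(p^\beta)$ and $T_\beta'(p)=O(\beta p^\beta)$, hence $G(p)=O(p^\beta)$ and $G'(p)=O(\beta p^\beta)$. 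Combining with Corollary \ref{numberofcomps_recip}, which gives $|\Lambda_n|^{-1}=O(p^n)$, I would conclude
\[
E_n(U_\beta)=O(np^\beta)+O(\beta p^\beta)+O((p/q)^n)=O(np^\beta).
\]

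The only substantive thing to check is that the exponentially small remainder $(p/q)^n$ does not swallow the polynomial main term $np^\beta$. For $\beta=\lfloor 5\log_{1/p} n\rfloor$ as in (\ref{mbeta}), one has $p^\beta\asymp n^{-5}$, which is only polynomially small, so the exponential remainder is negligible and the bound is genuine. Everything else is routine bookkeeping modeled on the earlier singularity-analysis arguments in the paper.
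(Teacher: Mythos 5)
Your proof is correct and rests on the same underlying idea as the paper's: a first-moment bound. In fact Boole's inequality applied to $\bigcup_{i,j} A_{i,j}$ (the paper's route) is exactly Markov's inequality applied to $U_\beta$, since $\sum_{i,j} P_n(A_{i,j}) = E_n\bigl[\sum_{i,j}\mathbf{1}_{A_{i,j}}\bigr] = E_n(U_\beta)$. Where you differ is in how the first moment is evaluated. The paper computes each $P_n(A_{i,j}) = |\Lambda_{i-1}|\,|\Lambda_{n-i-j+1}|/|\Lambda_n|$ directly, plugs in the asymptotic $|\Lambda_m|\sim p^{-m}\bigl(\sum_{k\in S}kp^k\bigr)^{-1}$ from Theorem \ref{numberofcomps}, and sums the resulting geometric series in $j$; this is quite short and elementary. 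You instead package $E_n(U_\beta)$ as $|\Lambda_n|^{-1}[x^n]\,T_\beta(x)/f(x)^2$ and run the singularity-analysis machinery of Theorems \ref{E(logB)} and \ref{V(logB)}. That works, and it has the minor virtue of unifying the estimate stylistically with the moment computations elsewhere in the paper, but it is heavier than needed here. One small point you gloss over: the implicit constant in your $O(1/q^n)$ remainder comes from $\max_{|x|=q}|\tilde G(x)/(x-p)^2|$, which a priori depends on $\beta$ through $T_\beta$; it is in fact uniformly bounded (indeed $O(q^\beta)$ since $|T_\beta(x)|\le \sum_{k>\beta}q^k$ on $|x|=q<1$), so the bound is genuine, but this deserves a sentence rather than silence. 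With that noted, your argument gives the same $O(np^\beta)$ conclusion and, for the specific $\beta=\lfloor 5\log_{1/p}n\rfloor$ of (\ref{mbeta}), the exponential remainder is indeed dominated, as you observe.
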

\begin{proof}
Let $A_{i,j}$ be the event that a part of size $j$ begins in position $i$.  Then 
\begin{equation*}
\overline{\Abeta}=\bigcup\limits_{i=1}^{n}\bigcup\limits_{j>\beta}A_{i,j}. 
\end{equation*}

Compositions in $A_{i,j}$ are constructed by first choosing a composition of $i-1$, then appending a part of of size $j$, then appending a composition of $n-(i-1)-j$.  If we adopt the convention that $|\Lambda_{0}|=1$, then for all $i$ and $j$
\begin{equation*}
P_{n}(A_{i,j})=\frac{ |\Lambda_{i-1}|\cdot |\Lambda_{n-i-j+1}|}{|\Lambda_{n}|}. 
\end{equation*}
Using first Boole's inequality and then Theorem \ref{numberofcomps} and (\ref{mbeta}), we get
\begin{equation*}
P_{n}( \overline{\Abeta} )\leq \sum\limits_{i=1}^{n}\sum\limits_{\beta<j\leq n} \frac{ |\Lambda_{i-1}|\cdot |\Lambda_{n-i-j+1}|}{|\Lambda_{n}| }=O(np^\beta). \qedhere
\end{equation*}
\end{proof}

The next lemma says that the moments are only slightly perturbed if we impose a reasonable bound $\beta$ on the sizes of the parts. 

\begin{lemma} \label{momentsAbeta}
For any $t$ and any choice of $\beta$, 
\begin{equation*}
E_n((\log{\bf B})^t)=E_n((\log{\bf B})^t |\Abeta )+ O(n^{t+1} p^\beta).
\end{equation*}
\end{lemma}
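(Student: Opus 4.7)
The plan is to use the law of total expectation to compare the unconditional and conditional moments, then control each piece using Lemma \ref{Abeta} and a crude deterministic bound on $\log {\bf B}$.

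First I would write
\begin{equation*}
E_n((\log{\bf B})^t) = P_n(\Abeta)\, E_n((\log{\bf B})^t\mid\Abeta) + P_n(\overline{\Abeta})\, E_n((\log{\bf B})^t\mid\overline{\Abeta}),
\end{equation*}
and then, using $P_n(\Abeta) = 1 - P_n(\overline{\Abeta})$, rearrange to
\begin{equation*}
E_n((\log{\bf B})^t) - E_n((\log{\bf B})^t\mid\Abeta) = P_n(\overline{\Abeta})\Bigl[E_n((\log{\bf B})^t\mid\overline{\Abeta}) - E_n((\log{\bf B})^t\mid\Abeta)\Bigr].
\end{equation*}
So the task reduces to bounding the prefactor and the bracketed difference separately.

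For the prefactor, Lemma \ref{Abeta} immediately gives $P_n(\overline{\Abeta}) = O(np^{\beta})$. For the bracket, the key observation is the elementary inequality $\log k \le k$ valid for every positive integer $k$, which implies that for every composition $\vec{\lambda}$ of $n$,
\begin{equation*}
0 \le \log{\bf B}(\vec{\lambda}) = \sum_i \log\lambda_i \le \sum_i \lambda_i = n.
\end{equation*}
Therefore $0 \le (\log{\bf B})^t \le n^t$ pointwise on $\Lambda_n$, which forces both conditional expectations to lie in $[0,n^t]$. Hence the bracketed difference is $O(n^t)$.

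Multiplying, I obtain
\begin{equation*}
\bigl|E_n((\log{\bf B})^t) - E_n((\log{\bf B})^t\mid\Abeta)\bigr| = O(np^{\beta}) \cdot O(n^t) = O(n^{t+1} p^{\beta}),
\end{equation*}
which is exactly the claim. There is no real obstacle here; the proof is essentially a decomposition of the expectation followed by plug-in of Lemma \ref{Abeta} and the trivial uniform bound $\log{\bf B}\le n$. The only subtlety worth noting is that the bound is entirely deterministic — no asymptotic input from Theorem \ref{E(logB)} is needed, which is appropriate since Lemma \ref{momentsAbeta} will later be used (with specific $\beta$ as in (\ref{mbeta})) precisely to justify replacing $E_n$ by $E_n(\cdot\mid\Abeta)$ in the moment estimates of the previous section.
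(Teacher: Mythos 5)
Your proof is correct and matches the paper's in all essentials: both apply the law of total expectation, bound $P_n(\overline{\Abeta})=O(np^\beta)$ by Lemma \ref{Abeta}, and use the crude deterministic bound $\log{\bf B}=O(n)$ so that $(\log{\bf B})^t=O(n^t)$. The only cosmetic difference is that you isolate the difference $E_n-E_n(\cdot\mid\Abeta)$ in one identity, whereas the paper writes separate upper and lower bounds; you also explicitly justify $\log{\bf B}\le n$ via $\log k\le k$, which the paper leaves implicit.
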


\begin{proof} 
The largest value that $\log {\bf B}$ can possibly attain is $O(n)$.  Therefore, by Lemma \ref{Abeta}, we have
\begin{align*}
E_n((\log{\bf B})^t)&=E_n((\log{\bf B})^t |\Abeta)P_n(\Abeta)+E_n((\log{\bf B})^t |\overline{\Abeta})P_n(\overline{\Abeta})\\
&\leq E_n((\log{\bf B})^t|\Abeta)+ \max((\log{\bf B})^t)P_n(\overline{\Abeta})\\
&= E_n((\log{\bf B})^t |\Abeta)  + O(n^t) \cdot O(np^\beta)\\
&= E_n((\log{\bf B})^t |\Abeta)+ O(n^{t+1} p^\beta).
\end{align*}
By a similar argument, $E_n((\log{\bf B})^t)\geq  E_n((\log{\bf B})^t |\Abeta)-O(n^{t+1} p^\beta).$
\end{proof}
For future reference, we note the following immediate corollary of Lemma \ref{momentsAbeta}.

\begin{corollary} \label{CmomentsAbeta}  
The mean, variance, and fourth moment of $\log {\bf B}$ are respectively
\begin{align*}
E_n(\log{\bf B})&=E_n(\log{\bf B}|\Abeta) +O(n^2 p^\beta)  \\
V_n(\log{\bf B})&=V_n(\log{\bf B}|\Abeta)+  O(n^3 p^\beta)  \\
R_n(\log{\bf B})&=R_n(\log{\bf B}|\Abeta)+  O(n^5 p^\beta).
\end{align*}
\end{corollary}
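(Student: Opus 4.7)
The plan is to derive each line from Lemma~\ref{momentsAbeta} by routine moment bookkeeping, using the trivial bound $\log{\bf B}\leq n$ (since $\log\lambda_i\leq \lambda_i$ and $\sum\lambda_i=n$), which in particular gives $E_n((\log{\bf B})^t)=O(n^t)$ and $E_n(\log{\bf B})=O(n)$ (and likewise for the conditional versions). The first line is literally the $t=1$ case of Lemma~\ref{momentsAbeta}, so nothing is required there.

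For the variance, I would write
\begin{equation*}
V_n(\log{\bf B})-V_n(\log{\bf B}\mid\Abeta)=\bigl[E_n((\log{\bf B})^2)-E_n((\log{\bf B})^2\mid\Abeta)\bigr]-\bigl[E_n(\log{\bf B})^2-E_n(\log{\bf B}\mid\Abeta)^2\bigr].
\end{equation*}
The first bracket is $O(n^3 p^\beta)$ directly from Lemma~\ref{momentsAbeta} with $t=2$. For the second, factor as a difference of squares: one factor is $E_n(\log{\bf B})-E_n(\log{\bf B}\mid\Abeta)=O(n^2 p^\beta)$ (the $t=1$ case), while the other is $E_n(\log{\bf B})+E_n(\log{\bf B}\mid\Abeta)=O(n)$. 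Multiplying gives $O(n^3 p^\beta)$, matching the stated bound.

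For the fourth central moment I would expand $(X-\mu)^4$ with $X=\log{\bf B}$ and $\mu=\mu_n$ to write
\begin{equation*}
R_n(X)=E_n(X^4)-4\mu_n E_n(X^3)+6\mu_n^2 E_n(X^2)-3\mu_n^4,
\end{equation*}
and the same with the conditional counterparts (write $\mu'_n=E_n(X\mid\Abeta)$). Subtracting, each resulting term is handled by the telescoping identity $A\mu^k-A'\mu'^k=(A-A')\mu^k+A'(\mu^k-\mu'^k)$: Lemma~\ref{momentsAbeta} supplies $A-A'=O(n^{j+1}p^\beta)$ for the $j$-th raw moments, and the geometric-sum factorization $\mu^k-\mu'^k=(\mu-\mu')\sum_{i=0}^{k-1}\mu^i\mu'^{k-1-i}=O(n^2 p^\beta)\cdot O(n^{k-1})$ controls the powers of the mean; combined with the trivial $O(n^j)$ bound on $A'$ and $O(n^k)$ bound on $\mu^k$, each of the four resulting terms is $O(n^5 p^\beta)$. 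Summing with the fixed binomial coefficients gives the stated error. There is no real obstacle here — the only care needed is the telescoping step, which is where sloppy pairing could inflate the exponent of $n$.
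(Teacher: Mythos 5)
Your proposal is correct, and it matches what the paper intends: the paper presents Corollary~\ref{CmomentsAbeta} as an ``immediate corollary'' of Lemma~\ref{momentsAbeta} with no written proof, and the routine bookkeeping you carry out (the $t=1$ case for the mean; difference-of-squares plus the trivial $O(n^t)$ bound for the variance; the binomial expansion of $(X-\mu)^4$ together with the telescoping identity $A\mu^k-A'\mu'^k=(A-A')\mu^k+A'(\mu^k-\mu'^k)$ for the fourth central moment) is exactly the intended derivation. All the exponent counts check out: every term in the fourth-moment expansion contributes $O(n^5 p^\beta)$, and your cautionary note about the telescoping step is well placed since that is the only spot where careless pairing could bump the power of $n$.
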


As a direct consequence of Lemma \ref{momentsAbetaPi} and Corollary \ref{CmomentsAbeta}, we note the following.

\begin{corollary}  \label{CmomentsAbetaPi}
For $i=1,\dots,m$,
\begin{align*}
E_n(\logBi | \Abeta, \vec{W})&=E_{\pi_i}(\log{\bf B})+O(\pi_i^2 p^\beta)  \\
V_n(\logBi  |  \Abeta, \vec{W} )&=V_{\pi_i}(\log{\bf B})+O(\pi_i^3 p^\beta)  \\
R_n(\logBi | \Abeta, \vec{W})&=R_{\pi_i}(\log{\bf B})+O(\pi_i^5 p^\beta).
\end{align*}
\end{corollary}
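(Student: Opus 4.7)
The proof is a direct chaining of Lemma~\ref{momentsAbetaPi} with Corollary~\ref{CmomentsAbeta}, invoked at sample size $\pi_i$ in place of $n$. First, for each $t\in\{1,2,3,4\}$ Lemma~\ref{momentsAbetaPi} supplies the exact identity
\begin{equation*}
E_n(\logBi^{\,t}\mid \Abeta,\vec{W})=E_{\pi_i}((\log{\bf B})^t\mid \Lambda_{\pi_i}^{\beta}).
\end{equation*}
Since variance and fourth central moment are fixed polynomial combinations of the first four raw moments, this transfers the quantities of interest cleanly to the smaller space, yielding
\begin{equation*}
E_n(\logBi\mid \Abeta,\vec{W})=E_{\pi_i}(\log{\bf B}\mid \Lambda_{\pi_i}^\beta),
\end{equation*}
and analogously for $V_n(\logBi\mid \Abeta,\vec{W})$ and $R_n(\logBi\mid \Abeta,\vec{W})$.

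Next I would apply Corollary~\ref{CmomentsAbeta} with $\pi_i$ in place of $n$ to strip the bounded-part conditioning. The corollary is stated for $\Lambda_n$, but its proof rests only on Lemma~\ref{Abeta} and Lemma~\ref{momentsAbeta}, both of which are generic statements about compositions of an arbitrary integer with parts bounded by the same fixed parameter $\beta$. Substituting $\pi_i$ throughout produces
\begin{align*}
E_{\pi_i}(\log{\bf B}\mid \Lambda_{\pi_i}^\beta) &= E_{\pi_i}(\log{\bf B}) + O(\pi_i^2 p^\beta),\\
V_{\pi_i}(\log{\bf B}\mid \Lambda_{\pi_i}^\beta) &= V_{\pi_i}(\log{\bf B}) + O(\pi_i^3 p^\beta),\\
R_{\pi_i}(\log{\bf B}\mid \Lambda_{\pi_i}^\beta) &= R_{\pi_i}(\log{\bf B}) + O(\pi_i^5 p^\beta),
\end{align*}
and chaining with the identities from the previous step gives the three assertions.

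There is no serious obstacle here, since both ingredients are already in hand and the corollary is essentially bookkeeping. The one subtlety I would verify is that the implicit constants in Corollary~\ref{CmomentsAbeta}'s error terms do not secretly depend on the outer composition size $n$ when we relabel $n\mapsto \pi_i$. Tracing back through the proof of Lemma~\ref{momentsAbeta}, the errors originate from the two bounds $\max(\log{\bf B})^t=O(\pi_i^t)$ and $P_{\pi_i}(\overline{\Lambda_{\pi_i}^\beta})=O(\pi_i\, p^\beta)$, each of which depends only on the current composition size and on $\beta$; hence the substitution is legitimate and the error exponents $\pi_i^{t+1}$ for $t=1,2,4$ give exactly the claimed $\pi_i^2$, $\pi_i^3$, $\pi_i^5$.
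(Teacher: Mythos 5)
Your proof is correct and follows exactly the route the paper intends: the corollary is stated there as an immediate consequence of Lemma~\ref{momentsAbetaPi} and Corollary~\ref{CmomentsAbeta}, and your argument simply spells out the chaining (raw-moment identity on the smaller space via Lemma~\ref{momentsAbetaPi}, then Corollary~\ref{CmomentsAbeta} applied with $\pi_i$ in place of $n$). Your added check that the implicit constants in the error terms depend only on $\beta$ and the local composition size, so that the substitution $n\mapsto\pi_i$ is legitimate, is a useful verification the paper leaves tacit.
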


\begin{corollary} \label{CmomentsAbetaPisum}
\begin{align*}
&\sum_{i=1}^m E_n( \logBi | \Abeta, \vec{W})=E_n ( \log{\bf B})+O(m\beta) \\
&\sum_{i=1}^m V_n(\logBi  | \Abeta, \vec{W})=V_n ( \log{\bf B})+O(m\beta).
\end{align*}
\end{corollary}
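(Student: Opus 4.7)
The plan is to substitute the refined expansions from Corollary \ref{CmomentsAbetaPi} into the summands and then replace each $E_{\pi_i}(\log{\bf B})$ and $V_{\pi_i}(\log{\bf B})$ by the closed-form asymptotics of Theorems \ref{E(logB)} and \ref{V(logB)}. Termwise, this gives
\begin{equation*}
E_n(\logBi | \Abeta,\vec{W}) = a_1 \pi_i + a_0 + O\bigl(\pi_i(p/q)^{\pi_i}\bigr) + O(\pi_i^2 p^\beta),
\end{equation*}
and an analogous identity with $b_1,b_0$ for the variance (with tails $O(\pi_i^2(p/q)^{\pi_i}) + O(\pi_i^3 p^\beta)$). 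Summing over $i=1,\dots,m$ produces a leading term $a_1 \sum_{i=1}^{m}\pi_i + m a_0$ (respectively $b_1\sum_{i=1}^m \pi_i + m b_0$), which I would then compare against $E_n(\log{\bf B}) = a_1 n + a_0 + o(1)$ from Theorem \ref{E(logB)} (respectively $V_n(\log{\bf B})=b_1 n + b_0 + o(1)$).

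The one combinatorial input needed is a bound on $\pi_0$ and $\pi_{m+1}$, the parts of $\vec{\lambda}$ that are \emph{not} captured in $\Pi_1,\dots,\Pi_m$. By construction $\Pi_0$ has exactly $m$ parts, each of size at most $\beta$, so $\pi_0 \leq m\beta$; and since $\tau_m$ is the first index with $\sum_{k=1}^{\tau_m}\lambda_k \geq m\lfloor n/m\rfloor$, one has $\pi_{m+1} = n-\sum_{k=1}^{\tau_m}\lambda_k \leq n-m\lfloor n/m\rfloor < m$. Consequently $\sum_{i=1}^{m}\pi_i = n - O(m\beta)$, so the leading sum becomes $a_1 n + m a_0 + O(m\beta)$, which differs from $a_1 n + a_0 + o(1)$ by $(m-1)a_0 + O(m\beta)$, i.e.\ $O(m\beta)$, precisely as claimed. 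The variance identity is handled in exactly the same way using Theorem \ref{V(logB)}.

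The main obstacle --- really just a bookkeeping chore --- is verifying that every tail estimate actually dissolves into $O(m\beta)$ under the specific choice (\ref{mbeta}) of $m$ and $\beta$. Because $\beta \geq 5\log_{1/p} n - 1$, one has $p^\beta = O(n^{-5})$, and so $\sum_{i=1}^{m}\pi_i^2 p^\beta \leq n^2 p^\beta = O(n^{-3})$, together with the analogous $\sum \pi_i^3 p^\beta = O(n^{-2})$ for the variance. For the other tail, the definition of $\tau_i$ gives the crude lower bound $\pi_i \geq \lfloor n/m\rfloor - 2\beta = \Theta\bigl(n^{2/3}(\log n)^{2/3}\bigr)$, which makes each $(p/q)^{\pi_i}$, and hence $\sum \pi_i^k(p/q)^{\pi_i}$, superpolynomially small in $n$. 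Once these routine bounds are in place, both statements follow by straightforward addition.
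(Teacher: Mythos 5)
Your proposal is correct and follows essentially the same route as the paper: substitute Corollary \ref{CmomentsAbetaPi} and Theorems \ref{E(logB)}, \ref{V(logB)} into the summands, sum over $i$, and absorb the exponentially small tails using the choice (\ref{mbeta}). The only cosmetic difference is that you tally the leading term via the global identity $\sum_{i=0}^{m+1}\pi_i=n$ combined with $\pi_0\le m\beta$ and $\pi_{m+1}<m$, whereas the paper uses the per-block bound $n/m-2\beta\le\pi_i\le n/m$ and sums; both give $\sum_{i=1}^{m}\pi_i=n+O(m\beta)$ and hence the same conclusion.
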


\begin{proof}

By Corollary \ref{CmomentsAbetaPi} and Theorem \ref{E(logB)}, we have
\begin{align*}
E_n( \logBi  |  \Abeta, \vec{W})&= E_{\pi_i}( \log{\bf B} )+ O(\pi_i^2 p^\beta) \nonumber \\
&=a_1 \pi_i + a_0 + O\left( \pi_i \left( \tfrac{p}{q} \right)^{\pi_i} \right) +O(\pi_i^2 p^\beta).
\end{align*}
Note that, for $i=1,\dots,m$, 
\begin{equation} \label{pi_error}
\tfrac{n}{m}-2\beta \leq \pi_i \leq \tfrac{n}{m}.
\end{equation}
Therefore
\begin{align*}
E_n( \logBi  |  \Abeta, \vec{W})&= a_1(\tfrac{n}{m}+O(\beta)) + a_0 + O\left(  \tfrac{n}{m} \left( \tfrac{p}{q} \right)^{ \frac{n}{m}-2\beta} \right) +O\left( \left( \tfrac{n}{m}\right)^2 p^\beta \right).
\end{align*}
Noting the definitions of $m$ and $\beta$ in (\ref{mbeta}), we have
\begin{equation*}
\sum_{i=1}^m E_n( \logBi  |  \Abeta, \vec{W})= a_1n+O(m\beta) .
\end{equation*}
We make a similar calculation for the variance, using Theorem \ref{V(logB)}:
\begin{align*}
V_n( \logBi | \Abeta, \vec{W})&=V_{\pi_i}( \log{\bf B} )+ O(\pi_i^3 p^\beta) \\
&=  b_1\pi_i + b_0  + O\left(  \pi_i^2 \left( \tfrac{p}{q} \right)^{\pi_i} \right) +O\left( \pi_i^3 p^\beta  \right)  \\
&=  b_1 (\tfrac{n}{m}+O(\beta) ) + b_0  + O\left(  (\tfrac{n}{m})^2 \left( \tfrac{p}{q} \right)^{\frac{n}{m}-2\beta} \right) +O\left( (\tfrac{n}{m})^3 p^\beta  \right).
\end{align*}
Noting (\ref{mbeta}), we have
\begin{align*}
\sum_{i=1}^m V_n( \logBi |  \Abeta, \vec{W})&= b_1 n +O(m\beta) .
\end{align*}
Theorems \ref{E(logB)} and \ref{V(logB)} give the statement of the corollary.
\end{proof}

The following two theorems are very well-known.  For example, in \cite{Feller}, see page 544 for Theorem \ref{esseen} and page 155 for Theorem \ref{momentineq}.  We use $\Phi(x)$ to denote the standard normal distribution.

\begin{theorem}  (Esseen inequality) \label{esseen} 
There is a positive constant $A$ such that,
for any choice of mutually independent (not necessarily identically distributed) random variables $X_1,\dots,X_m$, if $E(X_i)=0$ and $E(|X_i|)^3<\infty$ for $i=1,\dots,m$, then  
\begin{equation*}
\sup_x \left| P\left( \frac{\sum\limits_{i=1}^m X_i}{ \sum\limits_{i=1}^m E(X_i^2)} < x \right) - \Phi(x)  \right| \leq \frac{A \sum\limits_{i=1}^m E(|X_i|^3)}{\left( \sum\limits_{i=1}^m E(X_i^2)  \right)^{3/2}}.
\end{equation*}
\end{theorem}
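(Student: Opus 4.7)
The plan is to prove the Esseen inequality by the Fourier-analytic approach pioneered by Esseen, reducing the Kolmogorov distance between the law of the normalized sum and $\Phi$ to an integral estimate of characteristic functions, and then estimating that integral using independence together with third-moment control.

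The first ingredient is Esseen's smoothing lemma: for any distribution function $F$ with characteristic function $f$, and any $T>0$,
\begin{equation*}
\sup_x |F(x)-\Phi(x)| \leq \frac{1}{\pi}\int_{-T}^{T} \frac{|f(t)-e^{-t^2/2}|}{|t|}\,dt + \frac{c}{T}
\end{equation*}
for an absolute constant $c$. I would derive this in the usual way by convolving $F-\Phi$ with a non-negative, compactly supported smoothing kernel whose Fourier transform is explicit, then applying Fourier inversion and using $\sup \Phi'<\infty$ to control the truncation error. I would then apply it with $F$ equal to the law of $S=\sum_i X_i/s$, where $s^2=\sum_i E(X_i^2)$.

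The second ingredient is the characteristic function estimate. Write $L=\sum_i E(|X_i|^3)/s^3$; by independence $f(t)=\prod_i f_i(t/s)$ where $f_i$ is the characteristic function of $X_i$. Since $E(X_i)=0$, Taylor's theorem yields
\begin{equation*}
f_i(u) = 1 - \tfrac{1}{2}u^2 E(X_i^2) + R_i(u), \qquad |R_i(u)| \leq \tfrac{1}{6}|u|^3 E(|X_i|^3).
\end{equation*}
On the range $|t|\leq 1/(4L)$, each factor $f_i(t/s)$ satisfies $|f_i(t/s)-1|<1/2$, so I may take principal logarithms. Expanding $\log(1+z)=z-z^2/2+O(|z|^3)$ term by term and summing, the leading pieces assemble into $\sum_i (-t^2 E(X_i^2)/(2s^2))=-t^2/2$, and the error is $O(|t|^3 L)$ (using $E(X_i^2)^{3/2}\leq E(|X_i|^3)$ for the quadratic remainder as well). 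Exponentiating and comparing to $e^{-t^2/2}$ gives $|f(t)-e^{-t^2/2}|=O(|t|^3 L\,e^{-t^2/4})$ on this interval.

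Finally, I would choose $T=1/(4L)$ in the smoothing lemma. The integral contributes $O(L)\cdot\int_{\mathbb{R}} t^2 e^{-t^2/4}\,dt = O(L)$, and the boundary term $c/T$ is itself $O(L)$. This yields the stated bound with some absolute $A$. The main technical obstacle is the third step: justifying the complex-logarithm expansion uniformly for $|t|\leq 1/(4L)$ requires a consistent choice of branches for the $\log f_i(t/s)$ and a careful accounting of the cubic error from $\log(1+z)$ after summation over $i$; this is the step where the third-moment hypothesis does its essential work and where most of the constants are actually pinned down.
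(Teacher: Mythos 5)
The paper does not prove this theorem at all: it is the classical Berry--Esseen bound, which the authors simply cite from Feller (``see page 544''). So there is no paper proof to compare against, and your task is really whether the sketch you wrote would stand on its own. The overall strategy you describe --- Esseen's smoothing lemma, Taylor expansion of the $f_i$ using the third-moment bound, cutoff at $T\asymp 1/L$ --- is indeed the standard Fourier-analytic route, and it does work. But the central step as you have written it contains a genuine error.

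The claim ``on the range $|t|\le 1/(4L)$ each factor satisfies $|f_i(t/s)-1|<1/2$'' is false in the non-identically-distributed case. From $E(X_i)=0$ one gets $|f_i(t/s)-1|\le \tfrac{t^2}{2s^2}E(X_i^2)$, and at $|t|=T=1/(4L)$ this bound equals $\tfrac{E(X_i^2)/s^2}{32L^2}$. Lyapunov only gives $E(X_i^2)/s^2\le L^{2/3}$, so the quantity $\tfrac{E(X_i^2)/s^2}{32L^2}$ can be of order $L^{-4/3}$, which blows up as $L\to 0$ --- precisely the regime where the theorem is non-trivial. A concrete counterexample: take $X_1=\pm m^{1/4}$ and $X_2,\dots,X_m=\pm 1$, all symmetric. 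Then $s^2\approx m$, $L\approx m^{-1/2}$, $T\approx m^{1/2}/4$, and $f_1(t/s)=\cos(m^{1/4}t/s)$; at $t=T$ the argument of the cosine is about $m^{1/4}/4$, which exceeds $\pi/2$ once $m$ is moderately large, so $|f_1(T/s)-1|>1$. (In the i.i.d.\ case, where $E(X_i^2)/s^2=1/m$ and $L\gtrsim m^{-1/2}$, the claim does hold --- which is probably why it feels safe --- but it fails as soon as one summand can dominate the variance.) A secondary issue: even where the logarithms exist, the accumulated quadratic remainder $\sum_i|z_i|^2$ is $O(t^4L^{4/3})$, not $O(|t|^3L)$; this happens to integrate to $O(L)$, so it is not fatal, but your stated error form is not right.

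The standard way to repair this is to avoid term-by-term logarithms altogether. Compare the two products directly via the telescoping identity $\bigl|\prod a_i-\prod b_i\bigr|\le\sum_i|a_i-b_i|\prod_{j<i}|a_j|\prod_{k>i}|b_k|$ with $a_i=f_i(t/s)$ and $b_i=e^{-t^2E(X_i^2)/(2s^2)}$. Each difference $|a_i-b_i|$ is bounded by the third-moment Taylor remainder plus the elementary inequality $|1-x-e^{-x}|\le x^2/2$, and the exponential weight needed to make the integral converge comes from the global estimate $|f(t)|\le e^{-t^2/3}$ on $|t|\le 1/(4L)$, which one proves from $|f_i(u)|^2=E\cos\bigl(u(X_i-X_i')\bigr)$ together with $\cos x\le 1-x^2/2+|x|^3/6$. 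With that replacement your smoothing-lemma calculation goes through and yields the stated bound with an absolute constant $A$. Since the paper itself defers to Feller, it would be entirely appropriate for you to do the same; but if you want to include a proof, the step you flagged as ``the main technical obstacle'' is not a matter of choosing branches carefully --- it is a step where the proposed argument actually fails and must be replaced by the product-comparison device.
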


\begin{theorem} \label{momentineq}
If a random variable $X$ has a moment of order $s$, then for positive $r\leq s$,
\begin{equation*}
E(|X|^r)^{1/r} \leq E(|X|^s)^{1/s}.
\end{equation*}
\end{theorem}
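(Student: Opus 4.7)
The plan is to derive this inequality (commonly called Lyapunov's inequality) as an immediate consequence of Hölder's inequality. Set the conjugate exponents $p = s/r \ge 1$ and $q = s/(s-r)$ when $s > r$ (the case $s = r$ is trivial), so that $1/p + 1/q = 1$. Applying Hölder to the trivial factorization $|X|^r = |X|^r \cdot 1$, I obtain
\begin{equation*}
E(|X|^r) \le \bigl( E((|X|^r)^p) \bigr)^{1/p} \cdot \bigl( E(1^q) \bigr)^{1/q} = E(|X|^s)^{r/s}.
\end{equation*}
Raising both sides to the power $1/r$ yields the desired bound $E(|X|^r)^{1/r} \le E(|X|^s)^{1/s}$.

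An equivalent and perhaps more conceptual route uses Jensen's inequality applied to the convex function $\phi(t) = t^{s/r}$ on $[0,\infty)$ (which is convex precisely because $s/r \ge 1$), evaluated at the non-negative random variable $|X|^r$. This gives
\begin{equation*}
\bigl( E(|X|^r) \bigr)^{s/r} = \phi\bigl( E(|X|^r) \bigr) \le E\bigl( \phi(|X|^r) \bigr) = E(|X|^s),
\end{equation*}
and taking $1/s$ powers recovers the claim.

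There is no substantive obstacle here: the inequality is a one-line consequence of either classical tool, which is no doubt why Feller presents it as a basic fact and why the authors quote it without proof. The only minor technical point worth noting is that finiteness of $E(|X|^r)$ under the hypothesis $E(|X|^s) < \infty$ is automatic, since the right-hand side of the Hölder bound is finite by assumption and dominates the left-hand side. Thus both formulations yield the inequality essentially for free.
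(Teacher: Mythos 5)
Your proof is correct; both the H\"older and Jensen derivations are standard and complete, and your remark that $E(|X|^r)<\infty$ follows automatically from $E(|X|^s)<\infty$ is the right observation to make the statement well-posed. The paper itself offers no proof of this theorem --- it simply cites Feller (page 155), where the argument given is the Jensen/convexity one you present as your second route --- so your proposal supplies exactly the classical argument the authors are deferring to.
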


\begin{corollary} \label{error}
$$\sum_{i=1}^m E_n( | \logBi -E_n(\logBi  |  \Abeta, \vec{W} ) |^3\   |  \Abeta, \vec{W}) = O \left( \tfrac{n^{3/2}}{m^{1/2}} \right).$$
\end{corollary}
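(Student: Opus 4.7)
The plan is to apply the moment inequality (Theorem \ref{momentineq}) with $r=3$ and $s=4$ termwise, thereby reducing the third absolute central moment to the fourth central moment raised to the power $3/4$. Setting $X_i = \logBi - E_n(\logBi \mid \Abeta, \vec{W})$ and applying Theorem \ref{momentineq} with respect to the conditional measure $P_n(\cdot \mid \Abeta, \vec{W})$, I get
\[
E_n(|X_i|^3 \mid \Abeta, \vec{W}) \leq \bigl( E_n(X_i^4 \mid \Abeta, \vec{W}) \bigr)^{3/4} = R_n(\logBi \mid \Abeta, \vec{W})^{3/4},
\]
where $R_n(\cdot \mid \Abeta, \vec{W})$ denotes the conditional fourth central moment.

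Next I would use Corollary \ref{CmomentsAbetaPi} to replace $R_n(\logBi \mid \Abeta, \vec{W})$ by the unconditional quantity $R_{\pi_i}(\log{\bf B}) + O(\pi_i^5 p^\beta)$, and then invoke Theorem \ref{4th} to bound the main term by $O(\pi_i^2)$. With the choice of parameters in (\ref{mbeta}), $p^\beta = O(n^{-5})$, so using $\pi_i \leq n/m$ the error term satisfies
\[
\pi_i^5 p^\beta \leq (n/m)^5 \cdot O(n^{-5}) = O(m^{-5}),
\]
which is dominated by the main term $\pi_i^2 = O\bigl((n/m)^2\bigr)$. Hence $R_n(\logBi \mid \Abeta, \vec{W}) = O\bigl((n/m)^2\bigr)$.

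Substituting back yields $E_n(|X_i|^3 \mid \Abeta, \vec{W}) = O\bigl((n/m)^{3/2}\bigr)$, and summing over $i = 1, \ldots, m$ produces
\[
\sum_{i=1}^m E_n(|X_i|^3 \mid \Abeta, \vec{W}) = m \cdot O\bigl((n/m)^{3/2}\bigr) = O\bigl(n^{3/2}/m^{1/2}\bigr),
\]
which is the claimed bound. The argument is essentially mechanical once the fourth-moment bound of Theorem \ref{4th} is available; the only delicate point is verifying that the $p^\beta$ error in passing from $R_n(\cdot \mid \Abeta, \vec{W})$ to $R_{\pi_i}$ does not overwhelm the main term, and this is handled by the magnitude of $\beta$ chosen in (\ref{mbeta}).
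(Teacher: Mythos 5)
Your proposal is correct and follows essentially the same path as the paper's proof: Theorem~\ref{momentineq} with $r=3$, $s=4$ to pass to the fourth central moment, Corollary~\ref{CmomentsAbetaPi} to replace the conditional fourth moment by $R_{\pi_i}(\log{\bf B})+O(\pi_i^5p^\beta)$, Theorem~\ref{4th} for the $O(\pi_i^2)$ bound, and then summing using $\pi_i\le n/m$. The only difference is that you spell out explicitly why the $\pi_i^5p^\beta$ term is negligible, which the paper leaves implicit by just citing~(\ref{mbeta}).
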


\begin{proof} 
By Theorem \ref{momentineq}, we have
\begin{equation*}
E_n( | \logBi-E_n(\logBi  |  \Abeta, \vec{W}) |^3 |  \Abeta, \vec{W}) \leq R_n( \logBi |  \Abeta, \vec{W})^{3/4}.
\end{equation*}
Applying Corollary \ref{CmomentsAbetaPi} followed by Theorem \ref{4th}, we have
\begin{align*}
\sum_{i=1}^m E_n( | \logBi -E_n(\logBi  |  \Abeta, \vec{W})|^3 |  \Abeta, \vec{W}) &\leq \sum_{i=1}^m R_n( \logBi |  \Abeta, \vec{W})^{3/4} \\
&= \sum_{i=1}^m \left( R_{\pi_i}(  \log{\bf B} )+O(\pi_i^5 p^\beta)  \right)^{3/4} \\
&= \sum_{i=1}^m \left( O( \pi_i^2)+O(\pi_i^5 p^\beta) \right)^{3/4} .
\end{align*}
Recalling (\ref{pi_error}) and noting (\ref{mbeta}), the righthand side becomes
\begin{equation*}
\sum_{i=1}^m O \left( \tfrac{n^2}{m^2}\right)^{3/4} = \sum_{i=1}^m O \left( \tfrac{n^{3/2}}{m^{3/2}}\right)=O \left( \tfrac{n^{3/2}}{m^{1/2}}\right). \qedhere
\end{equation*}
\end{proof}

\section{Asymptotic lognormality of B}

\begin{proof}[Proof of Theorem 1.]

We will use the following shorthand notation:
\begin{equation*}
\mu_n=E_n(\log{\bf B}) \quad\quad\quad e_n=\sum\limits_{i=1}^m E_n( \logBi | \Abeta, \vec{W})
\end{equation*}
\begin{equation*}
\sigma_n^2=V_n(\log{\bf B}) \quad\quad\quad v_n=\sum\limits_{i=1}^m V_n( \logBi | \Abeta, \vec{W})
\end{equation*}
\begin{equation*}
t_n=\sum\limits_{i=1}^m E_n( | \logBi -E_n(\logBi |\Abeta, \vec{W})|^3 | \Abeta, \vec{W}).
\end{equation*}
By Lemma \ref{Abeta}, Corollary \ref{CmomentsAbetaPisum}, and Corollary \ref{error}, we have
\begin{align}
&P_n(\overline{\Abeta})=O\left( \tfrac{1}{n^4} \right) \label{1} \\
&\mu_n=e_n+O((n\log n)^{1/3}) \label{2} \\
&\sigma_n^2=v_n+O((n\log n)^{1/3}) \label{3} \\
&t_n=O( n^{4/3} (\log n)^{1/3}) \label{4}.
\end{align}
We begin the calculation by using (\ref{1}) to obtain
\begin{align}
P_n\left( \frac{\log{\bf B} - \mu_n}{\sigma_n}  \leq x \right)&=P_n\left( \frac{\log{\bf B} - \mu_n}{\sigma_n}  \leq x | \Abeta \right) P_n(\Abeta) \nonumber \\
&\hspace{2cm} + P_n\left( \frac{\log{\bf B} - \mu_n}{\sigma_n}  \leq x | \overline{\Abeta} \right) P_n(\overline{\Abeta}) \nonumber\\
&=P_n\left( \frac{\log{\bf B} - \mu_n}{\sigma_n}  \leq x | \Abeta \right) +O\left( \tfrac{1}{n^4} \right). \label{T_1}
\end{align}
We recall that, for a composition in $\Abeta$,
\begin{equation*}
\log{\bf B}=\sum\limits_{i=0}^{m+1} \logBi.
\end{equation*}
Conditioning on our choice of $\vec{W}$, we have
\begin{align} 
P_n\left( \frac{\log{\bf B} - \mu_n}{\sigma_n}  \leq x | \Abeta \right)&=\sum_{\vec{W}\in {\cal W}_n} P_n(\vec{W}) P_n\left( \frac{\sum\limits_{i=0}^{m+1} \logBi - \mu_n}{\sigma_n}  \leq x |  \Abeta, \vec{W} \right).  \label{T_2}
\end{align}
Using (\ref{2}) and (\ref{3}), and also noting that ${\bf L}_0 \leq m\log\beta < m\beta\leq(n\log n)^{1/3}$ and ${\bf L}_{m+1} \leq m \log\beta <m\beta\leq(n\log n)^{1/3}$, we have
\begin{equation*}
\sum\limits_{i=0}^{m+1} \logBi - \mu_n =\sum\limits_{i=1}^m \logBi - e_n +O((n\log n)^{1/3}) 
\end{equation*}
and
\begin{equation*}
\sigma_n=\sqrt{v_n+O((n\log n)^{1/3})}.
\end{equation*}
Therefore,
\begin{align} 
 P_n\left( \frac{\sum\limits_{i=0}^{m+1}\logBi - \mu_n}{\sigma_n}  \leq x |  \Abeta, \vec{W} \right)  &=  P_n\left( \frac{\sum\limits_{i=1}^m \logBi -  e_n+O((n\log n)^{1/3})}{\sqrt{v_n+O((n\log n)^{1/3})} }  \leq x |  \Abeta, \vec{W} \right) \nonumber\\
&=  P_n\left( \frac{\sum\limits_{i=1}^m \logBi  -  e_n}{\sqrt{v_n} }  \leq s_{n,x}   | \Abeta, \vec{W} \right)  \label{T_3}
\end{align}
where $s_{n,x}=\left( x - \frac{O((n\log n)^{1/3})}{\sqrt{v_n+O((n\log n)^{1/3})}} \right)  \sqrt{1+\frac{O((n\log n)^{1/3})}{v_n}} $.  We can now apply to (\ref{T_3}) the Esseen inequality from Theorem \ref{esseen}, followed by (\ref{4}) and Theorem \ref{V(logB)} to obtain

\begin{align}
P_n\left( \frac{\sum\limits_{i=1}^m \logBi - e_n }{\sqrt{ v_n} }  \leq s_{n,x}  | \Abeta, \vec{W} \right) &= \Phi(s_{n,x}) + O\left( \frac{ t_n }{(v_n)^{3/2}} \right) \nonumber\\
&= \Phi(s_{n,x}) + O\left( \tfrac{(\log n)^{1/3}}{n^{1/6}} \right).   \label{T_4}
\end{align}
Next we note the approximation
$$s_{n,x} =x\left( 1+O\left( \tfrac{(\log n)^{1/3}}{n^{2/3}} \right)  \right) +O\left( \tfrac{(\log n)^{1/3}}{n^{1/6}} \right).$$
Therefore there exist positive constants $c_1$ and $c_2$ such that, for any $n$,  
$$x\left( 1- \tfrac{c_1(\log n)^{1/3}}{n^{2/3}} \right)-\tfrac{c_2(\log n)^{1/3}}{n^{1/6}}  \leq  s_{n,x}  \leq  x\left( 1+ \tfrac{c_1(\log n)^{1/3}}{n^{2/3}} \right)+\tfrac{c_2(\log n)^{1/3}}{n^{1/6}}.$$
Since $0\leq e^{-t^{2}/2}\leq 1$, 
\begin{align*}
\Phi(s_{n,x})&\leq \Phi(x) + \frac{1}{\sqrt{2\pi}} \int_x^{x\left( 1+ \tfrac{c_1(\log n)^{1/3}}{n^{2/3}} \right)+\tfrac{c_2(\log n)^{1/3}}{n^{1/6}}} e^{-t^2/2} \ dt \\
&\leq \Phi(x) + \left(x\left( 1+ \tfrac{c_1(\log n)^{1/3}}{n^{2/3}} \right)+\tfrac{c_2(\log n)^{1/3}}{n^{1/6}} - x \right) \\
&= \Phi(x)+O\left(  \tfrac{x(\log n)^{1/3}}{n^{2/3}}+\tfrac{(\log n)^{1/3}}{n^{1/6}}  \right).
\end{align*}
Similarly, $\Phi(s_{n,x})\geq  \Phi(x)-O\left(  \tfrac{x(\log n)^{1/3}}{n^{2/3}}+\tfrac{(\log n)^{1/3}}{n^{1/6}}\right)$ and consequently
\begin{equation} \label{T_5}
\Phi(s_{n,x})=\Phi(x)+O\left(  \tfrac{x(\log n)^{1/3}}{n^{2/3}}+\tfrac{(\log n)^{1/3}}{n^{1/6}}  \right).
\end{equation}
We combine the results from (\ref{T_1}), (\ref{T_2}), (\ref{T_3}), (\ref{T_4}), and (\ref{T_5}) to obtain
\begin{align*}
P_n\left( \frac{\log{\bf B} - \mu_n}{\sigma_n}  \leq x \right)&=\sum_{\vec{W}} P_n(\vec{W}) \left( \Phi(x) + O\left( \tfrac{x(\log n)^{1/3}}{n^{2/3}}+\tfrac{(\log n)^{1/3}}{n^{1/6}}  \right)  \right)+ O\left( \tfrac{1}{n^4} \right) \\
&= \Phi(x) + O\left( \tfrac{x(\log n)^{1/3}}{n^{2/3}} + \tfrac{(\log n)^{1/3}}{n^{1/6}} \right).
\end{align*}
Hence, the rate of convergence is uniform for $\frac{|x|(\log n)^{1/3}}{n^{2/3}}\leq \frac{(\log n)^{1/3}}{n^{1/6}}$, i.e.\ $|x|\leq \sqrt{n}$.
\end{proof}

\section{Comments}
A possible  alternative approach to our problem
is to use Hwang's  Quasi-powers theorem or related techniques (\cite{Hwang}, page 645 of \cite{Flajolet}).
In this way, it may be possible to extend our results  to more general sets  $S.$
However, we prefer to record the combinatorial technique of this paper
in a relatively simple setting where technical details do  not obscure the main ideas.

We thank the Perline brothers, Richard and Ron, for motivation \cite{Perline}
and helpful comments.

\bibliography{Sbib-link}
\bibliographystyle{plain}

\end{document}